\setlist[enumerate]{leftmargin=.5in}
\setlist[itemize]{leftmargin=.5in}
\newcommand{\Z}{{\ensuremath{\mathbb{Z}}}}
\newcommand{\ZZ}{\Z}
\newcommand{\R}{{\ensuremath{\mathbb{R}}}}
\newcommand{\RR}{\R}
\newcommand{\DD}{{\ensuremath{\mathbb{D}}}}
\newcommand{\PP}{{\ensuremath{\mathbb{P}}}}
\newcommand{\calB}{{\ensuremath{\mathcal{B}}}}
\newcommand{\calD}{{\ensuremath{\mathcal{D}}}}
\newcommand{\calM}{{\ensuremath{\mathcal{M}}}}
\newcommand{\calV}{{\ensuremath{\mathcal{V}}}}
\newcommand{\calQ}{{\ensuremath{\mathcal{Q}}}}
\newcommand{\SSS}{{\ensuremath{\mathbb{S}}}}
\newcommand{\ST}{{\ensuremath{\mathrm{ST}}}}
\newcommand{\Sup}{{\ensuremath{\mathrm{Supp}}}}
\newcommand{\calP}{{\ensuremath{\mathcal{P}}}}
\newcommand{\bbx}{{\ensuremath{\boldsymbol{x}}}}
\newcommand{\bn}{{\ensuremath{\boldsymbol{n}}}}
\newcommand{\bd}{{\ensuremath{\boldsymbol{d}}}}
\newcommand{\bs}{{\ensuremath{\boldsymbol{s}}}}
\newcommand{\bt}{{\ensuremath{\boldsymbol{t}}}}
\newcommand{\calS}{{\ensuremath{\mathcal{S}}}}
\newcommand{\Toly}{\calV}
\newcommand{\Splsp}{\mathbb S}
\newcommand{\spanset}{\mathrm{span}\,}
\newcommand{\St}{\mathrm{star}\,}
\newcommand{\supp}{\mathrm{supp}\,}
\renewcommand\ge\geqslant
\renewcommand\geq\geqslant
\renewcommand\le\leqslant
\renewcommand\leq\leqslant
\newcommand{\eddi}{\Diamond}
\newtheorem{theorem}{Theorem}
\newtheorem{corollary}[theorem]{Corollary}
\newtheorem{lemma}[theorem]{Lemma}
\newtheorem{conj lemma}[theorem]{Conjectural Lemma}
\theoremstyle{definition}
\newtheorem{definition}[theorem]{Definition}
\newtheorem{remark}[theorem]{Remark}
\newtheorem{example}[theorem]{Example}
\newtheorem{proposition}[theorem]{Proposition}
\title{Completeness characterization of Type-I box splines}
\author[N. Villamizar]{Nelly Villamizar} 
\address{Nelly Villamizar\\
	Department of Mathematics\\
	Swansea University}
\email{n.y.villamizar@swansea.ac.uk}
\urladdr{\url{https://sites.google.com/site/nvillami}}
\author[A. Mantzaflaris]{Angelos Mantzaflaris}
\address{Angelos Mantzaflaris\\     
	Inria Sophia Antipolis--M\'editerran\'ee\\     
	Universit\'e C\^ote d'Azur}  
\email{angelos.mantzaflaris@inria.fr}
\urladdr{\url{https://www-sop.inria.fr/members/Angelos.Mantzaflaris/}}
\author[Bert J\"uttler]{Bert J\"uttler}
\address{Bert J\"uttler\\
	Institute of Applied Geometry\\     
	Johannes Kepler University Linz}  
\email{Bert.Juettler@jku.at}
\urladdr{\url{http://www.ag.jku.at}}
\begin{document}

\begin{abstract}
	We present a completeness characterization of box splines on three-directional triangulations, also called \emph{Type-I} box spline spaces, based on edge-contact smoothness properties.  
	For any given Type-I box spline, of specific maximum degree and order of global smoothness, 
	our results allow to identify the local linear subspace of polynomials spanned by the box spline translates. 
	We use the global super-smoothness properties of box splines as well as the additional super-smoothness conditions at edges to characterize the spline space spanned by the box spline translates. 
	Subsequently, we prove the completeness of this space space with respect to the local polynomial space induced by the box spline translates.
	The completeness property allows the construction of hierarchical spaces spanned by the translates of box splines for any polynomial degree on multilevel Type-I grids. 
	We provide a basis for these hierarchical box spline spaces under explicit geometric conditions of the domain.
\end{abstract}

\keywords{
Type-I box splines, contact edge-edge characterization, completeness of box spline spaces, hierarchical box spline spaces, kissing triangles, over-concave vertices.
}
\subjclass[2020]{
41A15, 65D07, 13D02
}

\maketitle

\section{Introduction}
\emph{Box splines} are locally supported piecewise polynomial functions defined on uniform grids. 
They were first introduced by de Boor and DeVore in \cite{deBoor} and are  considered a generalization of the univariate B-spline functions to the multivariate setting.
From a geometric point of view, box splines can be seen as density functions of the shadows of higher dimensional boxes and half-boxes \cite{Prautzsch2002Box}.
We remark that they can also be studied as a special case of the so-called simplex splines \cite{dahmen83}. 
Box splines possess a number of useful properties that make them well-suited for applications. 
For instance, it has been shown that box splines have small support (a few cells of the underlying grid), they are non-negative, form a partition of unity, and are refinable i.e., the box spline spaces on refined grids are nested
\cites{subdivbs83,deBoor2008}.  

Box splines can be defined from an arbitrary set of directional vectors in $\R^n$, but of particular interest for Geometric Design are box splines surfaces which are defined on uniform triangulations of the plane. 
In this article, we focus on \emph{type-I box splines}.
They are splines defined on three directional meshes that are commonly known as \emph{type-I triangulations} of $\R^2$.

From the rich literature on box splines, we mention a few monographs and survey articles which include \cites{Boor,deBoor2008,chui,Prautzsch2002Box,Lyche91}, and a few representative publications on three specific topics.  
Firstly, a
substantial number of results on the {\em approximation power} of box
splines is described in the literature,
e.g. \cites{Lyche2008,ron1993approximation}.  
Secondly, several publications discuss techniques for the {\em efficient manipulation} of box spline bases.  
A general stable evaluation algorithm is devised in \cite{stable}.  
In \cite{bbform09} the problem of efficient evaluation of box splines is addressed by making use of the local Bernstein representation of basis functions on each triangle. 
Also, {\em numerical integration} schemes, which are important for
applications, based on quasi-interpolation have been considered in
\cites{int2d,int3d}.
Recent applications of box splines include surface fitting \cite{kangchendeng}, and solving linear elasticity problems in isogeometric analysis \cite{Giannelli19}. 
In other areas of mathematics, the theory of box splines has been proved useful to compute the volume of polytopes, and to deal with the integration of continuous functions over polytopes \cite{xu2011}.

In this article, we are interested in the linear spaces of spline functions generated by the the translates of any fixed type-I box spline. 
These spline spaces share good approximation properties. 
For example, the set of translates of any type-I box spline form a partition of unity on $\R^2$, they are globally, and also locally, linearly independent.
These properties of type-I box splines were studied by Dahmen and Micchelli in \cite{dahmen85a} and Jia in \cite{jia84}. 
In particular, they investigated the linear independence of translates of a box spline in \cite{jia85} and \cite{dahmen85b}.

The linear independence property implies that the set of translates of any type-I box spline constitutes a basis for the spline space they span. 
In general this property is not satisfied for box spline functions associated to other uniform partitions, and that makes type-I box splines particularly relevant for applications. 
For instance, the set of translates of box splines defined from a set of four directional vectors, the so-called type-II box splines, are linearly dependent. 
For a concise treatment of type-II box splines, further references, and alternative proofs of box spline properties, see \cite{LaiSchu}*{Chapter 12} and \cite{chui}*{Chapter 2}.

A second interesting feature of type-I box splines is that, although for any fixed type-I box spline of degree $d$ and order of smoothness $r$, the box spline translates form a basis, in general these translates do not generate all possible piecewise polynomial functions of degree $\leq d$ and global smoothness $r$ over the three--directional mesh.  
More precisely, the box spline translates span a proper subspace of the space of $C^r$-continuous spline functions $\calS_d^r(G)$ of degree at most $d$ on a three--directional mesh $G$ (cf. Figure~\ref{fig:grid}), for any $d>1$.
If the domain is taken as an infinite grid $G$, or as an infinite collection of triangles in $G$, then both spaces are infinite dimensional. However, if the domain is restricted to a finite collection of triangles $\Omega$, as it is the usual setting in practice, then their finite dimension differ.
Explicit dimension formulas in terms of the combinatorics of the domain $\Omega$ are well known for both spaces, $\dim \calS_d^r(\Omega)$ can be computed using homological methods \cite{bn1}, or Bernstein-B\'ezier methods as in \cite{chui}*{Chapter 2}.

In this work, we provide a characterization for the space spanned by box splines translates based on supersmooth conditions across the edges of the underlying partition. 
We prove that for any fixed degree $d$ and order of global smoothness $r$, the space of splines satisfying these extra local smoothness conditions is precisely the space spanned by the translates of the corresponding type-I box spline. 
From this we deduce that the type-I box spline spaces are complete with respect to the local polynomial space induced by the box spline translates. 
The proof of this result uses the Fourier transform of box splines, as well as the algebraic properties of the Bernstein-B\'ezier representations of type-I box splines.
We generalise the classic definition of spline space in Definition \ref{def:newspline} to link the spline polynomial pieces to specific polynomial subspaces of $\R[x,y]$, and present the main result in the paper in Section \ref{sec:ccl}.

Furthermore, we apply the completeness characterization of box splines into the construction of hierarchical spline spaces based on local refinements of a type-I triangulation. 
Hierarchical splines constitute a well-established approach to
adaptive refinement in geometric modeling \cite{fb88} and numerical analysis \cites{musthasan11,Schillinger2012116,vgjs11}.
Hierarchical tensor-product spline spaces were introduced by Kraft in \cite{kraft1998} using a selection mechanism for B-splines. 
The method has been refined leading to spline basis with better approximation properties, such as the partition of unity property, strong stability and full approximation power \cites{THB,gjs13,msEffortless,zeng2015}. 
It has also been adapted to Powell-Sabin splines \cite{speleers2009}, Zwart-Powell elements and B-spline-type basis functions for cubic splines on regular grids \cite{Zore}. 
In an earlier article, we constructed a hierarchical basis for quartic $C^2$-continuous box splines \cite{vmj2015}. 
Quartic hierarchical box splines spaces have also been studied and used for surface fitting applications by Kang, Chen and Deng in \cite{kangchendeng} and \cite{kangdeng2015}.
Truncated hierarchical type-I box splines were considered in \cite{Giannelli17} and \cite{Giannelli19} in connection to isogeometric analysis applications. 
Other subdivision schemes has been explored in \cite{Gerot19}. A $C^1$-continuous scheme based on cubic half-box splines was presented in \cite{Sabin19}.

The results we present in this article generalize our previous work \cite{vmj2015} on quartic box splines. 
Our results apply to type-I box splines of any polynomial degree with no restriction on the symmetry of their support. 

The remainder of this paper is organized as follows. 
In Section~\ref{sec:prelim} we introduce the relevant notation for type-I triangulations, spline functions and the directional derivatives. 
Section~\ref{sec:BS} concerns the definition and properties of type-I box spline spaces. We define the space of translates and recall existing results on local and global smoothness of these functions. 
In Section~\ref{sec:ccl} we prove Lemma~\ref{lemma:ccl} which is the main result in the paper, and corresponds to the edge-contact characterization for type-I box splines.
In Section~\ref{sec:H} we construct the hierarchical type-I meshes and the corresponding  hierarchical box spline spaces. 
This construction follows the approach presented in \cite{mokris} and \cite{vmj2015}.
We conclude the paper with some final remarks in Section~\ref{sec:conlusion}.  

\section{Preliminaries}\label{sec:prelim}

Throughout this article we assume that $G$ is the uniform type-I triangulation of the real plane $\RR^2$, see Figure \ref{fig:grid}.
This triangulation is obtained by drawing in the north-east diagonals in the bi-infinity grid with grid lines at the integers.
This triangulation of the plane is associated to three directional vectors, namely $\bm e_1=(1,0)$, $\bm e_2=(0,1)$ and $\bm e_3=(1,1)$, and therefore is also called a three directional mesh. 
Each line of $G$ is parallel to one of these vectors and go through the points of the integer grid $\ZZ^2$.

The collection $T$ of triangles in $G$ are considered as open sets in, and we denote by $E$ the set of all edges in $T$, and $V=\ZZ^2$ the set of vertices.
The set of edges is the disjoint union $E=E_1\sqcup E_2\sqcup E_3$,
where $E_i$ is the set of edges that are parallel to the vector $\bm e_i$. The edges in $E_i$ are called edges of \emph{type $i$}.
\begin{figure}[ht]
	\centering
	\includegraphics[width=3.5cm]{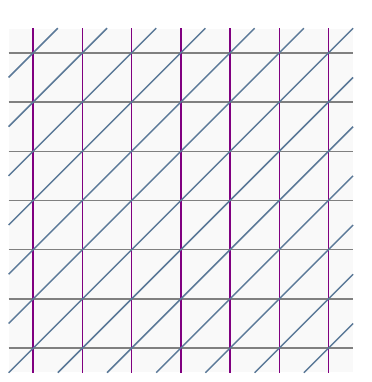}
	\caption{Uniform type-I triangulation (or three directional mesh) of $\RR^2$ associated to the directional  vectors $\bm e_1 = (0,1), \bm e_2 = (0,1)$ and $\bm e_3 = (1,1)$. We denote this grid as $G$.}\label{fig:grid}
\end{figure}
The combinatorial closure of a triangle $\triangle\in T$, denoted by $\hat
\triangle$, is the set consisting of the vertices and edges of
$\triangle$, and $\triangle$ itself. 
Analogously, $\hat \varepsilon$ of an edge $\varepsilon\in E$ is the set consisting of the edge itself and its two vertices.

A multicell domain $M$ is the triangulation in $\R^2$
induced by a finite set of triangles 
$\{\triangle_1,\, \dots,\, \triangle_m\}\subset T$, i.e.
\begin{equation*}
	M = 
	\bigcup_{i=1}^m \hat \triangle_i \ .
\end{equation*}
This means, that 
for every triangle $\triangle_i\in M$,
all the vertices and edges of $\triangle_i$ are considered as elements
of $M$.
The subspace of $\R^2$ defined by the (topological) closure $\overline\triangle_i$ of the triangles $\triangle_i$ defining the multicell domain $M$ will be denoted by $M^*$, namely
\begin{equation}\label{eq:M*}
	M^*=\bigcup_{\triangle\in M} \overline{\triangle}\ .
\end{equation}
Given a multicell domain $M$, the \emph{diamond of an edge $\varepsilon$} is
defined as the union of all (at most two) triangles of $M$ which have $\varepsilon$ as an edge, that is
\begin{equation*}
	\eddi(\varepsilon) = \bigcup_{\triangle\in M ,\, \varepsilon \in \hat{\triangle} } \hat \triangle \ . 
\end{equation*}
Similarly, the \emph{diamond of a vertex} $\nu$ is defined by
\begin{equation*}
	\eddi(\nu) = \bigcup_{\triangle\in M , \, \nu \in \hat{\triangle}} \hat \triangle \ , 
\end{equation*}
which is the union of the (at most six) triangles $\triangle$ in $M$
such that $\nu$ is a vertex of $\hat\triangle$.

Notice that the diamond $\diamond(\cdot)$, of an edge or a vertex, depends on the multicell domain $M$. From the context, it will be clear the particular domain we are considering in each case.

\medskip

We denote by $\R[x,y]$ the space of bivariate polynomials over the real numbers, and for $d\geq 0$,  $\calP_d\subseteq \R[x,y]$ is the set of all bivariate polynomials in $x$ and $y$ of total degree $\leq d$. 
In our presentation, the polynomial pieces that define the splines are taken from a finite vector subspace $\calV$ of $\R[x,y]$. This subspace $\calV$ is not necessarily the same as $\calP_d$ for any polynomial degree $d$, it may be a proper linear subspace. In this setting, we define the space of continuous splines $\PP(M, \calV)$ on a multicell domain $M$ as follows. 

\medskip

\begin{definition}\label{def:newspline}
	Given a multicell domain $M$, and a vector subspace $\Toly\subseteq\R[x,y]$,  we define  $\PP(M,\calV)$ as the {\em set of piecewise polynomials} functions on $M$ i.e.,
	\begin{equation*}
		\PP(M, \calV) = 
		\bigl\{
		f\in C^0(M^*) \colon   f|_\triangle\in\Toly|_\triangle \text{\, for each triangle } \triangle\in M
		\bigr\},
	\end{equation*}
	where $M^*$ is as defined in Equation \eqref{eq:M*}, and  $f|_\triangle$ denotes the restriction of the function $f$ to the triangle $\triangle$, and $\calV|_\triangle$ is the restriction to $\triangle $ of the polynomials in $\calV$ (seen as functions on $\R^2$).
\end{definition}

In particular, when $\calV=\calP_d$, the space $\PP(M,\calV)$ coincides with the usual space of $C^0$-continuous splines (or piecewise polynomial functions) on $M$ of degree at most $d$.

For any index $\bs=(s_1,s_2,s_3)\in\ZZ^3_{\ge0}$, that we also called \emph{regularity vector},  we consider the \emph{mixed
	directional derivative operator}
	\begin{align*}
		D_\bs\colon  \R[x,y]& \to\R[x,y]\\
		p\; &\mapsto   (\nabla \cdot \bm e_1)^{s_1}(\nabla \cdot \bm e_2)^{s_2}(\nabla \cdot \bm e_3)^{s_3}
		(p)\ .  
	\end{align*}
	For a given multicell domain $M$ and a polynomial vector space $\calV$, we extend the operator $D_\bs$ to elements $f\in\PP(M,\calV)$ by applying $D_\bs$ to
	the restrictions $f|_\triangle$, namely
	\begin{align*}
		(D_\bs f)|_\triangle = D_\bs (f|_\triangle)\ , \text{ for each triangle }\triangle\in M. 
	\end{align*}
	\begin{definition}\label{def:DI}
		For a given index set $I\subset\ZZ^3_{\ge0}$, and a vector space of functions $\calV\subseteq\R[x,y]$, we define the space of  
		functions $\DD_I(M,\calV)$ on a multicell domain $M$ by
		\begin{equation*}
			\DD_I(M,\calV) = \bigl\{
			f\in \PP(M,\calV) \colon  D_\bs f \in C^0({M^*}) \text{\; for all } \bs\in I
			\bigr\},
		\end{equation*}
		where $M^*$ is as defined in Equation \eqref{eq:M*}.
	\end{definition}
	\begin{remark}
		Using the notation in Definition \ref{def:DI} we have
		\begin{equation*}
			\DD_\emptyset(M,\calV) =\PP(M,\calV)\ .
		\end{equation*}
		If $\calS^r_d(M)$ denotes the space of globally $C^r$-continuous spline
		functions on $M$ of degree at most $d$, then $\calS^r_d(M)$ can be written as
		$\DD_I(M,\calP_d)$, where $I=\bigl\{\bm s\in\ZZ^3_{\ge0}\colon
		s_1+s_2+s_3\le r\bigr \}$. 
	\end{remark}
	
	In this paper, for a given multicell domain $M$, we shall consider piecewise polynomial functions, or splines, on $M$ with a specific order of smoothness associated to each of the three directions associated to the grid $G$.  Namely, for a regularity vector 
	$\bd=(d_1,d_2,d_3)\in\ZZ^3_{\ge0}$ we define the index sets:
	\begin{align*}
		I_{1}^\bd &= \bigl\{\bs\in \Z^3_{\geq 0}\colon s_2 + s_3\leq d_1 \bigr\},\\
		I_{2}^\bd &= \bigl\{\bs\in \Z^3_{\geq 0}\colon s_1 + s_3\leq d_2 \bigr\}, \\
		I_{3}^\bd &= \bigl\{\bs\in \Z^3_{\geq 0}\colon s_1 + s_2\leq d_3 \bigr\},
	\end{align*}
	and consider the spline space $\SSS^\bd(M,\calV)$ defined as follows. 
	
	\begin{definition}\label{def:edge-splines}
		For a multicell domain $M$ of the three directional grid $G$, a vector space $\calV\subseteq \R[x,y]$, and
		a vector $\bd\in \Z^3$, the \emph{spline space with edge smoothness $\bd$ on
			$M$} denoted $\SSS^\bd (M,\calV)$ is defined as 
		the set of piecewise polynomial functions on $M$ such that the derivatives of order
		$\bs\in D_i$ are continuous across the edges of type $i$ for
		$i=1,2,3$. More precisely,
		\begin{multline*}
			\SSS^\bd(M,\calV) = \bigl\{ f\in\PP(M,\Toly) \colon
			f|_{\eddi(\varepsilon)}\in C^{d_i}\left({\eddi(\varepsilon)^*}\right)
			\text{\, for every\, } \varepsilon\in E_i\cap M\\
			\text{and \;} i \in\{ 1,2, 3\}
			\bigr\}\ ,
		\end{multline*}
		where $\eddi(\varepsilon)^*=\bigcup_{\varepsilon\in\hat\triangle, \triangle\in M}\hat\triangle$ as defined in Equation \eqref{eq:M*}.
		
	\end{definition}

	Later in this paper (see Definition \ref{def:vertex-splines} below), we shall introduce a spline space but with smoothness conditions at the vertices of the domain $M$, the notation in Definition \ref{def:edge-splines} will be particularly convenient for that purpose. 
	
	In the following example we illustrate Definition \ref{def:edge-splines} for a specific multicell domain in the grid $G$ and a regularity vector $\bm d\in \ZZ_{\ge 0}^3$. 
	\begin{example}\label{example-edges}
		Let $M$ be the multicell domain in Figure \ref{fig:example-edges}. 
		It is composed by four triangles denoted $\triangle_1,\dots,\triangle_4$, we take $\calV=\calP_2$ (the polynomials in $\R[x,y]$ of degree at most $2$), and the regularity vector $\bd=(0,1,0)$. Then
		\[
		I_1^\bd =\{(i,0,0)\colon i\in\Z_{\geq 0}\}, \; 
		I_2^\bd =\{(0,j,0),(1,j,0),(0,j,1)\colon j\in\Z_{\geq 0}\}\ ,\]
		\[ \text{and } \;
		I_3^\bd =\{(0,0,k)\colon k\in\Z_{\geq 0}\}\ .\]
		If we define $f\in\PP(M,\calP_2)$ by  $f|_{\hat \triangle_{i}}=f_i$, where 
		\begin{equation}\label{ex:f}
			\begin{aligned}[c]
				f_1 & =x^2; \\
				f_2 & = (x-y+1)^2; 
			\end{aligned} 
			\quad
			\begin{aligned}[c]
				f_3 & =(y-2x)(y-2); \\
				f_4 & =  2(x-1)(y-x) + 2x-y^2\ .
			\end{aligned}
		\end{equation}
		Then $f$  is an element in $\SSS^\bd(M,\calP_2)$. 
		In fact, if we put $g_{1,2}=(f_1,f_2)=f|_{\diamond(\varepsilon_1)}$, since $f_1-f_2=(y-1)(2x-y+1)$ then
		$D_\bs \, g_{1,2}$ is a continuous function on $\diamond(\varepsilon_1)$ for $\bs\in I_1^\bd$. Similarly, if we put $g_{i,i+1}=(f_i,f_{i+1})$ it is easy to check that $D_\bs \, g_{2,3}$ and $D_\bt \, g_{3,4}$ are continuous functions for every $\bs\in I_2^\bd$ and $\bt\in I_3^\bd$ on $\diamond(\varepsilon_2)$ and $\diamond(\varepsilon_3)$, respectively.  \hfill$\diamond$
		
		\begin{figure}
			\centering
			\includegraphics[width=4.5cm]{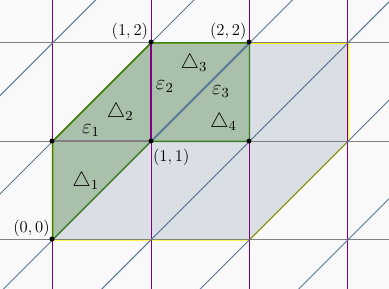}
			\caption{Multicell domain $M=\cup_{i=1}^4\hat\triangle_i$, with edges $\varepsilon_i=\overline\triangle_i\cap \overline\triangle_{i+1}$ for $i=1, 2, 3$.}\label{fig:example-edges}
		\end{figure}
	\end{example}
	In an analogous way as we defined a spline space associated to smoothness along the edges (Definition \ref{def:edge-splines}) of a multicell domain, we will introduce a space of splines with additional smoothness at the vertices of a given multivariate domain $M$. 
We prepare this definition by listing the possible vertex-vertex contact configurations    $\overline\triangle\cap\overline\triangle'=\nu$ between any pair of triangles
$\triangle,\triangle'\in T$. 
First we need the following definitions.
\begin{definition}\label{def:connected}
	Two triangles $\triangle$ and $\triangle'$ in the grid $G$, are said to be {\em edge-connected} if there is a 
	collection of triangles
	$\triangle_0,\triangle_1, \dots, \triangle_m\in T$ such that $\triangle=\triangle_0$, 
	$\triangle'=\triangle_m$ and 
	$\overline\triangle_{i-1}\cap \overline\triangle_{i}\in E$ for every $i=1,\dots m$. 
	Such a collection of triangles $\triangle_0,\triangle_1, \dots, \triangle_m$ is called an \emph{edge-connected chain} between $\triangle$ and $\triangle'$.  
\end{definition}

\begin{definition}\label{def:smooth_type}
	If $\triangle,\triangle'\in T$ are triangles such that $\overline\triangle\cap\overline\triangle'\neq \emptyset$, we define the
	{\em smoothness type} $\ST(\triangle,\triangle')\subseteq\{1,2,3\}$ as the set of edge-types that are in the shortest edge-connected chain in $G$ between $\triangle$ and $\triangle'$. If $\triangle=\triangle'$ we define $\ST(\triangle,\triangle')=\emptyset$. 
\end{definition}
For any given pair of triangles  $\triangle,\triangle'\in T$ with a non-empty intersection, we can identify them  with a pair of triangles from $A$ to $F$ in Figure \ref{fig:321}, and their smoothness type $\ST(\triangle,\triangle')$ becomes one of the subsets listed in the table on the left of Figure \ref{fig:321}. 
\begin{figure}
	\hfill\begin{minipage}[c]{.45\linewidth}
		\scriptsize
		$\begin{array}{c|cccccc}
			\ST(\triangle,\triangle')  & A & B & C & D & E & F\\\hline
			A & \emptyset  &\{1\}          & \{1{,}2\}     & \{1{,}2{,}3\}&\{2{,}3\}     &\{3\}    \\
			B & \{1\}         & \emptyset  & \{2\}         & \{2{,}3\}    &\{1{,}2{,}3\} &\{1{,}3\}\\
			C & \{1{,}2\}     & \{2\}         & \emptyset  & \{3\}        &\{1{,}3\}     &\{1{,}2{,}3\} \\
			D & \{1{,}2{,}3\} & \{2{,}3\}     & \{3\}         & \emptyset &\{1\}         &\{1{,}2\}  \\
			E & \{2{,}3\}     & \{1{,}2{,}3\} & \{1{,}3\}     & \{1\}        & \emptyset &\{2\}     \\
			F & \{3\}         & \{1{,}3\}     & \{1{,}2{,}3\} & \{1{,}2\}    & \{2\}        &\emptyset \\
		\end{array}$    
	\end{minipage}\hspace{2.5cm}\begin{minipage}[r]{.20\linewidth}
		\includegraphics[width=2.5cm]{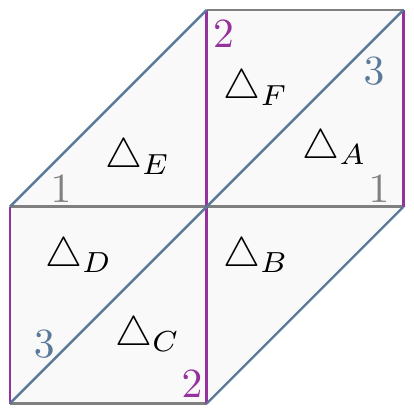}
	\end{minipage}\hfill\phantom{.}
	
	\caption{ Smoothness types of a pair of triangles
		$\triangle,\,\triangle'$ such that
		$\triangle\cap\triangle'\ne\emptyset$. We can identify
		$\triangle,\,\triangle'$ with two triangles in the picture on the
		right.  The type $\ST(\triangle,\triangle')$ is the
		corresponding index set shown in the table on the left side, which
		is constructed according to the shortest edge-connected chain
		between them (see Definition~\ref{def:connected}).}\label{fig:321}
\end{figure}

We now use Definitions \ref{def:connected} and \ref{def:smooth_type} to introduce the space of \emph{strongly regular splines} associated to a multicell domain $M$ in the three directional grid $G$.

\begin{definition}\label{def:vertex-splines}
	For a multicell domain $M\subset T$, a vector space $\calV\subseteq \R[x,y]$, and a regularity vector
	$\bd=(d_1,d_2,d_3)$, a spline $f\in \PP(M,\calV)$ is said to be \textit{strongly regular} if the for any pair of triangles $\triangle,\triangle'\in M$ such that $\triangle\cap\triangle'\neq \emptyset$, for every triple of indexes associated to the smoothness type $\ST(\triangle,\triangle')$ the derivatives of $f$ are $C^0$-smooth. 
	(Notice that the edge-connected chain are composed by triangles in the grid $G$ and are not necessarily in $M$.)
	The set of all strongly regular splines on $M$ will be denoted as $\hat{\SSS}^\bd(M, \calV)$, it is given by
	\begin{multline*}
		\hat{\SSS}^\bd(M, \calV) = \biggl\{f\in\PP(M,\calV)\,\colon\,
		f|_{U}\in\DD_{I}(U,\calV) 
		\text{ for }  \triangle,\triangle'\in M, \triangle\cap\triangle'\neq \emptyset\ ,\\
		U = \overline\triangle\cup\overline\triangle' \, , 
		{\mbox{ and $I=\bigcap_{i\in \ST(\triangle,\triangle')} I_{i}^\bd$}}
		\biggr\}.
	\end{multline*}
	The set $\hat{\SSS}^\bd(M, \calV)$ is the linear space of splines \emph{with edge and vertex   smoothness $\bd$ on the multicell domain $M$}.
\end{definition}

\medskip

\begin{example}\label{ex:propercontained}
	Let $M$ be the multicell domain in Figure \ref{fig:example-edges}, $\calV=\calP_2$ and $\bd=(0,1,0)$ as in Example~\ref{example-edges}.
	It is easy to check that the piecewise function $f$ defined in Equation \eqref{ex:f} is in $\hat{\SSS}^\bd(M, \calV)$. For instance, if we take the triangles $\triangle_1$ and $\triangle_4$, the smoothness type $\ST(\triangle_1,\triangle_4)=\{1,2,3\}$. Then   $I=\bigcap_{i=1}^3 I_i^{\bd}=\{(0,0,0)\}$, and in fact $f_1(1,1)=f_4(1,1)$. 
	Similarly, taking the triangles $\triangle_2$ and $\triangle_3$, we get $\ST(\triangle_2,\triangle_4)=\{2,3\}$ and $I=\bigcap_{i=2}^3 I_i^{\bd}=\{(0,0,k)\colon k=0,1\}$. The polynomials $f_2$ and $f_4$ and also their derivatives $\partial(f_i)/\partial (x-y)$, for $i=2 $ and $3$, have the same value at $(1,1)$.
	
	In contrast, if $g$ is the function on $M$ defined by $g|_{\triangle_i}=g_i$ with $g_1= 0$, $g_2= y-1$, $g_3= x^2-2x+y$ and $g_4= x^2 -y$, then $g$ is also in $\SSS^\bd(M,\calV)$, but it is not in $\hat{\SSS}^\bd(M, \calV)$. In fact, for the triangles $\triangle_2$ and $\triangle_4$, the derivatives $\partial g_2/\partial(x-y) = -1$ and $\partial g_4/\partial(x-y) = 2x + 1$. 
	Then $g|_{U}\not\in \DD_I(U,\calV)$ for $U=\overline\triangle_2\cup\overline\triangle_4$.\hfill $\Diamond$
\end{example}
\begin{remark}
	From Definitions \ref{def:edge-splines} and \ref{def:vertex-splines}, it is clear that both $\hat{\Splsp}^\bd(M,\Toly)$ and
	$\Splsp^\bd(M,\Toly)$ are contained in the space of splines that are globally
	$C^r$-continuous on $M$, where $r=\min\{d_1,d_2,d_3\}$.  
	Moreover, they are both contained in
	$\DD_{I}(M,\calV)$ for $I = I_{1}^\bd \cap I_{2}^\bd \cap
	I_{3}^\bd$ (see Definition \ref{def:DI}), and $\hat\SSS^\bd(M,\calV)\subseteq \SSS^\bd(M,\calV)$. By Example \ref{ex:propercontained} we also know that the set of strongly regular splines $\hat\SSS^\bd(M,\calV)$ may be properly contained in the spline space $\SSS^\bd(M,\calV)$.
\end{remark}
In the following we give a sufficient condition for the equality $\Splsp^\bd(M,\Toly)=\hat{\SSS}^\bd(M, \calV)$.  
For this we introduce the concept of over concave vertices and kissing triangles. 

\begin{definition}\label{def:oconcave}
	For a multicell domain $M$ in the three-directional grid  $G$ we say
	that a vertex $\nu \in V$ on the boundary of $M$ is \emph{over-concave}
	if $\St(\nu) \setminus M$ consists of a single triangle,
	where $\St(\nu)=\bigcup_{\substack{\triangle\in T,\\ \nu\in\hat\triangle }} \hat\triangle$\ . 
	
	Since we work exclusively in the three directional grid $G$ then $\St(\nu)$ consists of 6 triangles, 6 edges and $v$ itself, 
	see Figure~\ref{fig:overconcave}.
\end{definition}
\begin{figure}[ht]
	\centering
	\includegraphics[width=5cm]{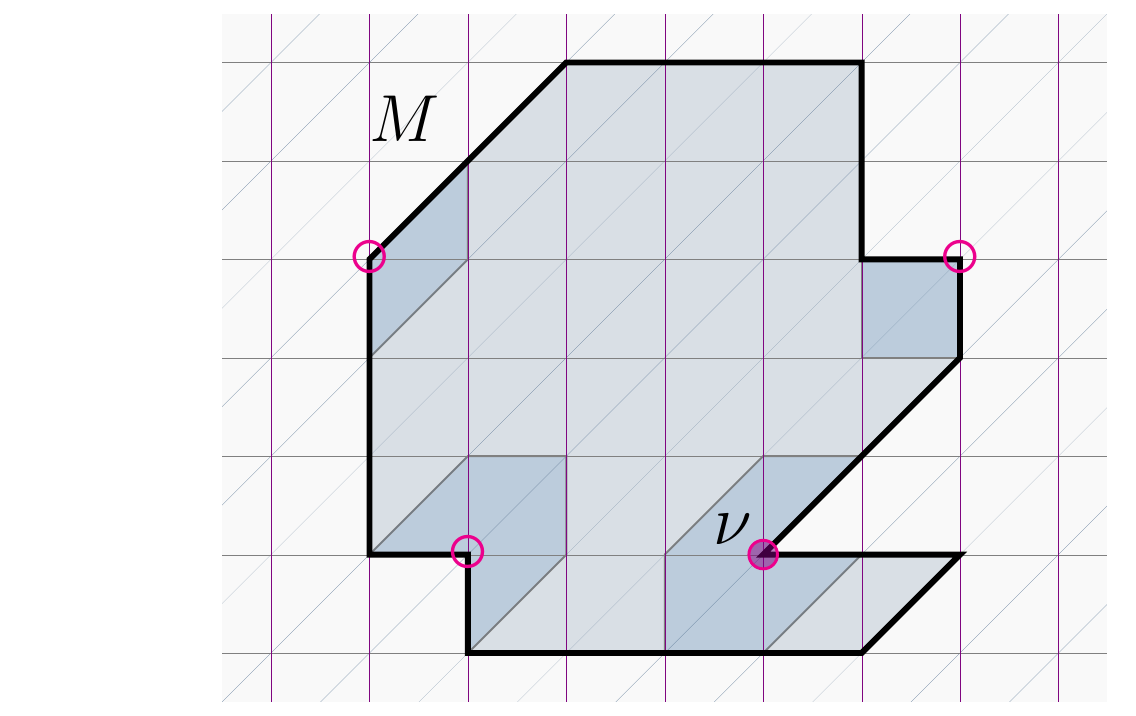}
	\caption{The vertex $\nu$ on the boundary of $M$ is an
		\emph{over-concave vertex} as in Definition \ref{def:oconcave}.}\label{fig:overconcave}
\end{figure}

\begin{definition}\label{def:kissing}
	Two triangles $\triangle$ and $\triangle'$ in $M$ such that $\triangle
	\cap \triangle'=\{v\}$ is a vertex $v\in V$ will be called \emph{kissing
		triangles}. 
\end{definition}
In Figure~\ref{fig:321}, for instance, the triangles $\{A,E\}$, $\{A,D\}$ $\{A,C\}$ are kissing triangles.

\begin{proposition}\label{prop:adm1}
	If $M$ be a multicell domain in the three directional grid $G$, such that it does not have kissing triangles nor over-concave boundary vertices, then  $\Splsp^\bd(M,\Toly)=\hat{\SSS}^\bd(M, \calV)$ i.e., all splines with edge smoothness $\bd$ on $M$ are strongly regular. 
\end{proposition}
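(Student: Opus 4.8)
The plan is to establish the nontrivial inclusion $\Splsp^\bd(M,\Toly)\subseteq\hat{\SSS}^\bd(M,\calV)$, the reverse being valid on every multicell domain by the Remark that precedes the statement. So I fix a spline $f\in\Splsp^\bd(M,\Toly)$ and check the defining conditions of $\hat{\SSS}^\bd(M,\calV)$ pair by pair: for all $\triangle,\triangle'\in M$ with $\overline\triangle\cap\overline{\triangle'}\neq\emptyset$, writing $U=\overline\triangle\cup\overline{\triangle'}$ and $I=\bigcap_{i\in\ST(\triangle,\triangle')}I_i^\bd$, I must show $f|_U\in\DD_I(U,\calV)$. Since $M$ is a triangulation, $\overline\triangle\cap\overline{\triangle'}$ is either all of $\overline\triangle$ (when $\triangle=\triangle'$, where $\ST=\emptyset$ and there is nothing to prove), a common edge, or a single common vertex, so only these two genuine cases remain.

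The engine of the argument is a propagation principle. Suppose $\triangle=S_0,S_1,\dots,S_k=\triangle'$ is an edge-connected chain whose triangles all lie in $M$, which crosses the edges $\eta_1,\dots,\eta_k$ of types $\tau_1,\dots,\tau_k$, and suppose these edges are interior to $M$ with $\{\tau_1,\dots,\tau_k\}=\ST(\triangle,\triangle')$. Then $f|_U\in\DD_I(U,\calV)$: each $\eta_l$ is an interior edge of type $\tau_l$, so edge-smoothness of $f$ forces $D_\bs f$ to be continuous across $\eta_l$ for every $\bs\in I_{\tau_l}^\bd$; because $I\subseteq I_{\tau_l}^\bd$ for all $l$, any fixed $\bs\in I$ gives a function $D_\bs f$ that is continuous across every edge of the chain and hence takes a single value along the chain at the shared point, so $D_\bs(f|_{\overline\triangle})=D_\bs(f|_{\overline{\triangle'}})$ there. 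This is exactly the condition $f|_U\in\DD_I(U,\calV)$. The edge-adjacent case then follows at once: here $\ST(\triangle,\triangle')=\{i\}$, the chain has length one, and the required smoothness across the interior shared edge is precisely the $C^{d_i}$-continuity guaranteed by $f\in\Splsp^\bd(M,\calV)$.

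It remains to treat a vertex-sharing pair, that is, a pair of kissing triangles meeting at a vertex $\nu$. Here I would walk around $\nu$ along the short arc that realizes $\ST(\triangle,\triangle')$ and apply the propagation principle to the resulting chain; the hard part will be to certify that this minimal arc lies inside $M$ and uses only interior edges. This is exactly where the two geometric hypotheses are needed, and it is the main obstacle. I expect to carry out a finite case analysis of the possible configurations formed by the (at most six) triangles of $\St(\nu)$ that belong to $M$. The delicate point is that if the minimal arc between $\triangle$ and $\triangle'$ is interrupted by a triangle of $\St(\nu)$ that is absent from $M$, then the only available chain inside $M$ runs along the complementary arc, which crosses strictly more edge-types than $\ST(\triangle,\triangle')$ and therefore only yields continuity of $D_\bs f$ for $\bs$ in a proper subset of $I$ -- too weak to conclude. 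The assumption that $\nu$ is not over-concave excludes the five-triangle fan in which the single missing triangle of $\St(\nu)$ blocks precisely the minimal two-edge arc joining the two extreme triangles of the fan, while the assumption that $M$ has no kissing triangles removes the remaining (gapped or disconnected) configurations in which a vertex-sharing pair is separated inside $M$ by a missing triangle. Once the case analysis confirms that every such pair admits a minimal connecting chain inside $M$ -- or that no kissing pair occurs at all -- the propagation principle applies uniformly and gives $f\in\hat{\SSS}^\bd(M,\calV)$, as desired.
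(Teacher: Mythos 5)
Your reduction to the inclusion $\Splsp^\bd(M,\Toly)\subseteq\hat{\SSS}^\bd(M,\Toly)$ and your handling of edge-adjacent pairs are sound and agree with what the paper intends; the one step you assert without comment --- that $C^{d_i}$-contact across an edge of type $i$ forces continuity of $D_\bs f$ for \emph{every} $\bs\in I_i^\bd$, even though the tangential order $s_i$ is unbounded --- is the standard fact for piecewise polynomials (two polynomials agreeing on a segment parallel to $\bm e_i$ agree on the whole line through it, hence so do their $\bm e_i$-derivatives), and deserves a sentence. The genuine flaw is your last paragraph. By Definition~\ref{def:kissing}, a kissing pair \emph{is} a pair of triangles of $M$ whose closures meet in exactly one vertex; ``vertex-sharing pair'' and ``kissing pair'' are the same notion, as the opening sentence of that very paragraph of yours states. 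So the hypothesis that $M$ has no kissing triangles means the vertex-sharing case never arises, and the proof is finished as soon as the edge-adjacent case is. Instead, you treat vertex-sharing pairs as present, propose to connect them by a minimal arc around $\nu$, call certifying that arc ``the main obstacle'', and defer it to a finite case analysis that you do not perform. As written, your proof is therefore incomplete --- and it is incomplete on a case that is empty. Your closing alternative, ``or that no kissing pair occurs at all'', is not a possible outcome of the case analysis; it is the hypothesis itself.

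The same misreading shows in the roles you assign to the two assumptions: you use ``no over-concave vertices'' to exclude the five-triangle fan and ``no kissing triangles'' only to exclude the remaining gapped configurations. But at an over-concave vertex the two extreme triangles of the five-triangle fan both lie in $M$ and meet only at $\nu$, so they already form a kissing pair: the no-kissing hypothesis subsumes the over-concave one, which is redundant in this proposition. This is why the paper's proof is a single line citing Definitions~\ref{def:edge-splines}, \ref{def:vertex-splines} and \ref{def:oconcave}: once kissing pairs are forbidden, every pair of triangles entering Definition~\ref{def:vertex-splines} shares an edge, and for such pairs strong regularity coincides with edge smoothness. Your chain-propagation principle is the natural tool for the genuinely hard question --- vertex--vertex contact when kissing pairs \emph{are} permitted, the analogue at vertices of Lemma~\ref{lemma:ccl} that the paper only verifies computationally in its concluding remarks --- but it is not needed here.
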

\begin{proof}
	The statements follows directly from Definitions \ref{def:edge-splines}, \ref{def:vertex-splines} and \ref{def:oconcave}.
\end{proof}

\section{Box splines on type-I triangulations}\label{sec:BS}
In this section we define box splines on the uniform type-I triangulation $G$ defined in Section \ref{sec:prelim}, Figure \ref{fig:grid}. 
This triangulation has vertices at all lattice points $(i,j)\in\ZZ^2$.
\begin{definition}\label{def:support}
	If $\beta$ is a real-valued function on $\R^2$, we denote by $\supp(\beta)$ the support $\beta$, and it is defined as the set of points $\bbx\in\R^2$ such that $\beta(\bbx)\neq 0$. 
\end{definition}
Recall from Definition \ref{def:oconcave}, that the \emph{star of a vertex} $\nu\in V$, denoted $\St(\nu)$, is defined by
\[\St(\nu)=\bigcup_{\triangle\in T, \, \nu\in\hat\triangle} \hat\triangle\ .
\]
It is the multicell domain composed by all the triangles $\triangle\in T$ which have $\nu$ as one of their vertices, together with the edges and vertices of these triangles. 
\begin{definition}\label{def:convolution}
	If $\bn=(n_1, n_2, n_3)\in \ZZ^3$ is a triple of integers such that $n_i \geq 1$, the type-I box spline $\calB_\bn$ associated to $\bn$ is defined recursively by
	\[
	\calB_\bn(\bbx) = \int_0^1 \calB_{\bn - \bm e_i}(\bbx-t\bm e_i)dt \ ,
	\]
	for $\bbx\in\R^2$ and $i\in\{1,2,3\}$ such that $\bn -\bm e_i\geq
	\boldsymbol{1}= (1,1,1)$; the function $\calB_{\boldsymbol 1}$ is the
	classical {\em Courant hat function} with support on the star of the
	vertex $(1,1)$ given in Figure \ref{fig:courant}. More precisely, $\calB_{\boldsymbol 1}$ is the
	piecewise linear function on $\R^2$ satisfying
	$\calB_{\boldsymbol 1}(1,1)=1$ and $\calB_{\boldsymbol 1}(i,j)=0$ for
	$(i,j)=(0,0), (0,1), (1,2), (2,1)$ and $(2,2)$.
\end{definition}
\begin{figure}[ht]
	\begin{center}
		\centering
		\includegraphics[width=3cm]{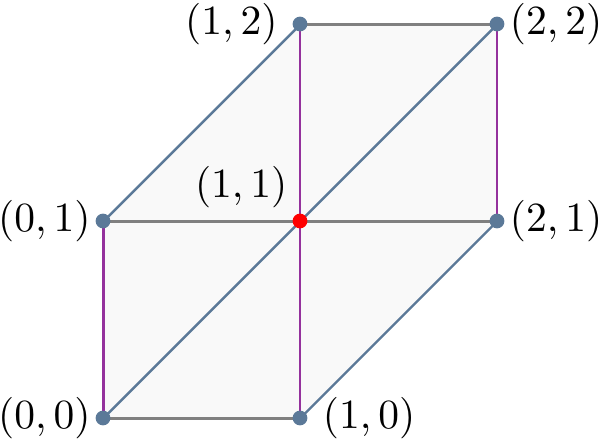}\quad\includegraphics[width=3.1cm]{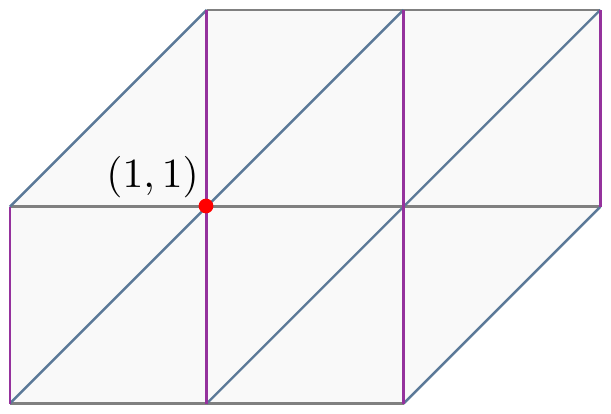}\quad\includegraphics[width=3.1cm]{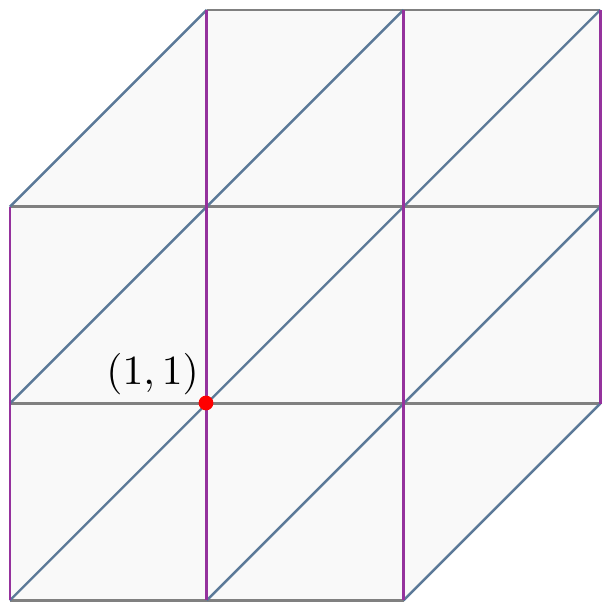}
	\end{center} 
	\vspace{-0.2cm}
	\caption{Support of the Courant hat function $\calB_{\boldsymbol 1}$, it corresponds to  $\St(\nu)$ where $\nu$ is the vertex $(1,1)$ (left), support of the box splines $\calB_{(2,1,1)}$ (center) and  $\calB_{(2,2,1)}$ (right).}\label{fig:courant}
\end{figure}
The coordinates $n_i$ of $\bn$ denote the number of convolutions of
$\calB_{\boldsymbol 1}$ along the directions $\bm e_i$. 
The support of the box spline $\calB_\bn$ is the zonotope in $\R^2$ formed
by the Minkowski sum of the direction vectors $\bm e_i$ taken $n_i$
times, for $i=1,2,3$, respectively (see Figure \ref{fig:courant} for an example). 
Moreover, for every $\bn$, the box spline $\calB_{\bm n}$ is strictly positive for all $\bm x$ in the interior of its support, and zero otherwise \cite{LaiSchu}*{Theorem 12.2}. 

From the general theory of type-I box splines, it follows that the box spline
$\calB_\bn$ is independent of the order in which of the vectors $\bm e_i$ appear in the recursive construction of $\calB_\bn$ in Definition~\ref{def:convolution}. 
This result follows immediately form the formula for the Fourier transform of a type-I box spline \cite{LaiSchu}*{Theorem 12.6}.

For any $\bn$, the box spline $\calB_\bn$ is a piecewise polynomial function on three directional triangulation $G$, and each polynomial is of total degree $n=|\bn| -2$, where $|\bn|=n_1+n_2+n_3$. 
It is also well-known that $\calB_\bn$ is a $C^r$-continuous function on $\R^2$, where $r=\min_{i=1,2,3}\bigl\{ |\bn|-n_i-2 \bigr\}$ \cite{LaiSchu}*{Theorem 2.4}.

There is a rich literature on type-I box splines, a detailed
construction and the proof of structural and smoothness properties can be found for
instance in \cite{LaiSchu}*{Chapter 12} or \cite{Boor}.
In particular, it is known that each convolution along a direction $\bm
e_i$ increases the continuity of the box spline with respect to differentiation in
that direction by one \cite{LaiSchu}*{Theorem 12.3}. Thus, following the notation
introduced in Section \ref{sec:prelim}, $\bd = (n_2+n_3 -2,\, n_1+n_3 -2,\,  n_1+n_2-2)$ then 
\begin{equation}
	\label{eq:Bn}
	\calB_\bn \in \SSS^\bd(G,\calP_n)\ .
\end{equation}
We denote by $B_\bn(G)$ the set of integer \emph{translates of the box spline} $\calB_\bn$, which is defined as the set 
\begin{equation}\label{eq:translates}
	B_\bn(G) =\bigl\{\calB_\bn(\, \cdot - \bm v)\, \colon \bm v\in\Z^2\bigr\}\ .
\end{equation}
The set $B_\bn(G)$ is also called the set of \emph{shifted box splines} associated to the direction vector $\bn$. 
\begin{remark}\label{rem:lind} 
	Notice that the translates in $B_\bn(G)$ have distinct support, it is the zonotope which is the support of $\calB_{\bm n}$ shifted by $\bm v$. 
	In fact, the set $B_\bn(G)$ is (globally) linearly independent \cite{LaiSchu}*{Theorem 12.19} i.e., if 
	\[\sum_{\bm v\in \ZZ^2} a_{\bm v}\calB_{\bm n}(\bm x -\bm v)=0, \text{ for all }  \bm x\in\RR^2\]
	then $a_{\bm v}=0$ for all $\bm v\in \ZZ^2$.
	Furthermore, it has been shown that the translates in $B_\bn(G)$ are also locally linearly independent i.e., if $A$ is an open set,  then the shifted box splines 
	\[
	\bigl\{\calB_\bn(\cdot-\bm v)\colon \supp\bigl(\calB_\bn(\cdot-\bm v)\bigr)\cap A\neq \emptyset\bigr\}
	\]
	are linearly independent~\cites{DahM85,jia85}.
	Here, $\supp(\beta)$ denotes the support of the function $\beta$ (Definition \ref{def:support}).
\end{remark}
We now introduce the definition of the set of active box splines on a given multicell domain $M$ in the type-I triangulation $G$. 
\begin{definition}\label{def:active}
	If $M \subseteq G$ is a multicell domain, we define
	\begin{align*}
		\Sup_{\bm n}(M) & =\bigl\{\bm v\in \Z^2\colon \supp\bigl(\calB_{\bm n}(\cdot -\bm v)\bigr)\cap M^*\neq \emptyset  \bigr\}, \\
		\intertext{and the set of \emph{active box splines} $\calB_\bn$ on $M$ by}
		B_\bn(M) & = \bigl\{\calB_{\bm n}(\cdot -\bm v)\bigr|_{M} \colon \bm v\in\Sup_{\bm n}(M)\bigr\},
	\end{align*}
	where $M^*$ is the closure of $M$ in $\R^2$ as defined in Equation \eqref{eq:M*}.
\end{definition}
In particular, if the multicell domain $M=\hat{\triangle}$, for a triangle $\triangle\in T$, then
$B_\bn(\hat\triangle)$ is the set of 
translates $\calB_\bn(\,\cdot -\bm v)\in B_n(G)$ whose support contains $\triangle$.
The number of elements in $B_\bn(\hat\triangle)$ is given by 
\begin{equation} \label{eq:phi}
	\phi(\bn) =  n_1n_2+n_1n_3+n_2n_3,
\end{equation}
where $\bn=(n_1,n_2,n_3)$. 
\begin{definition}
	If $\calB_{\bm n}(\cdot -\bm v)$ is the translate of the type-I box spline $\calB_{\bm n}$ by $\bm v\in\ZZ^2$, then we take $(1,1)-\bm v$ as the point of reference of $\supp\bigl(\calB_{\bm 2}(\cdot -\bm v)\bigr)$ in the lattice. For a triangle $\triangle\in T$, we define the \emph{1-ring neighbourhood} of $\triangle$ as the set of \emph{reference lattice points} $(1,1)-\bm v$ such that $\sup\bigl(\calB_{\bm n}(\cdot -\bm v)\bigr)\cap \triangle \neq \emptyset$.
\end{definition}
For instance, if $\bm 2 =(2,2,2)$ then the elements in $B_{\bm 2}(\hat\triangle)$
are the translates $\calB_{\bm 2}(\cdot -\bm v)$ associated to the $\phi(\bn)=12$ lattice points
in a 1-ring neighbourhood of $\triangle$ in the grid $G$, which are shown in Figure~\ref{fig:onering}.
\begin{figure}[ht]
	\centering
	\includegraphics[width=4.5cm]{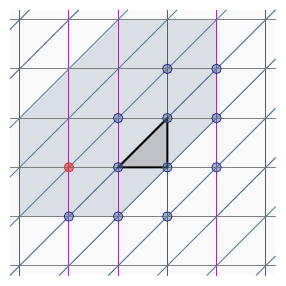}
	\caption{The 12 lattice points correspond to the 1-ring of a triangle $\triangle\in T$ associated to the box spline $\calB_{\bm 2}=\calB_{(2,2,2)}$. Each of these points is the reference point $(1,1)-\bm v$ for the translate $\calB_{\bm 2}(\cdot -\bm v)$ such that $\triangle\subseteq \supp(\calB_{\bm 2}(\cdot -\bm v))$.}\label{fig:onering}
\end{figure}

Notice that to any type-I box spline $\calB_{\bm n}$ and a triangle  if $\triangle\in T$ we can associate a linear space of polynomials. Namely, extending by linearity, we can take $\calV_\bn|_\triangle$ as the space generated by the restriction of the active box splines $\calB_{\bm n}$ to the triangle $\triangle$ i.e.,  
\begin{equation}\label{eq:Vn}
	\calV_{\bm n}|_\triangle = \spanset B_\bn(\hat\triangle)\ .
\end{equation}
By Equations \eqref{eq:Vn} and \eqref{eq:Bn}, we see that $\calV_\bn|_\triangle$ is a linear subspace of $\calP_n\subseteq \RR[x,y]$, for any $\triangle\in T$ and $n=n_1+n_2+n_3-2$.

We now prove that for any $\bn\in\ZZ^3$, the subspace $\calV_{\bm n}|_\triangle\subseteq \calP_n$ is independent of the choice of the triangle $\triangle\in T$. 
The proof of this result is a generalization of \cite{vmj2015}*{Proposition 27}, where we consider the case $\bm 2=(2,2,2)$ and the box spline $\calB_{\bm 2}$.
\begin{proposition}\label{prop:spaceVn}
	Let $\bn=(n_1,n_2,n_3)\in\Z^3_{\geq 1}$ and $f \in \calP_n$, with $n=n_1+n_2+n_3-2$. Then,
	$\calV_\bn|_{\triangle}=\calV_\bn|_{\triangle'}$ for any pair of triangles $\triangle$ and $\triangle'$ in $T$.
\end{proposition}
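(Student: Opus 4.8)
The plan is to prove the statement by induction on $|\bn| = n_1+n_2+n_3$, after two preliminary reductions. First I would record two facts valid for \emph{every} triangle $\triangle\in T$: by the partition of unity property $\sum_{\bm v\in\Z^2}\calB_\bn(\cdot-\bm v)\equiv 1$, the constant function lies in $\calV_\bn|_\triangle$; and by the local linear independence of $B_\bn(G)$ recalled in Remark~\ref{rem:lind}, the $\phi(\bn)$ active box splines restrict to $\triangle$ as linearly independent polynomials, so that $\dim\calV_\bn|_\triangle=\phi(\bn)$ is already independent of $\triangle$. Consequently it suffices to prove a single inclusion $\calV_\bn|_{\triangle'}\subseteq\calV_\bn|_\triangle$; equality then follows from the equal dimensions.

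Second, I would reduce to a pair of triangles sharing an edge. Directly from the definition, if $\triangle'=\triangle+\bm w$ with $\bm w\in\Z^2$ then $\calV_\bn|_{\triangle'}$ is the image of $\calV_\bn|_\triangle$ under the translation $p\mapsto p(\cdot-\bm w)$. Since the dual graph of $G$ is edge-connected, any two triangles are joined by a chain of edge-adjacent triangles, so it is enough to establish $\calV_\bn|_\triangle=\calV_\bn|_{\triangle'}$ whenever $\overline\triangle\cap\overline{\triangle'}$ is an edge; this translation covariance then propagates the equality to all of $T$. The symmetries of the three-directional grid permuting the three edge families further reduce the work to a single edge type.

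The engine of the induction is the box spline derivative identity $D_{\bm e_i}\calB_\bn = \calB_{\bn-\bm e_i}-\calB_{\bn-\bm e_i}(\cdot-\bm e_i)$, valid whenever $n_i\geq 2$, where $D_{\bm e_i}$ denotes the directional derivative along $\bm e_i$ (the operator $D_\bs$ with $\bs=\bm e_i$). Applying it termwise shows $D_{\bm e_i}\bigl(\calV_\bn|_\triangle\bigr)\subseteq\calV_{\bn-\bm e_i}|_\triangle$, and the right-hand space is independent of $\triangle$ by the inductive hypothesis applied to $\bn-\bm e_i$. Since $\bigcap_i\ker D_{\bm e_i}$ consists only of constants and the constants lie in every $\calV_\bn|_\triangle$, the space $\calV_\bn|_\triangle$ is pinned down by its three triangle-independent derivative images together with the constant, and comparing $\triangle$ with an edge-adjacent $\triangle'$ yields the inclusion. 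The base case $\bn=(1,1,1)$ is the Courant hat, where $\calV_\bn|_\triangle=\calP_1$ for every triangle, checked directly.

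I expect the main obstacle to be the reconstruction step: the images $D_{\bm e_i}(\calV_\bn|_\triangle)$, $i=1,2,3$, determine a polynomial in $\calP_n$ only up to the kernel $\bigcap_i\ker D_{\bm e_i}$, so one must verify that the \emph{joint} derivative images arising from $\triangle$ and from $\triangle'$ coincide as subspaces of compatible derivative-tuples, rather than merely agreeing in each individual projection. I would control this using the global super-smoothness \eqref{eq:Bn}, namely $\calB_\bn\in\SSS^\bd(G,\calP_n)$, which forces the difference of the polynomial pieces of an active box spline across the shared type-$i$ edge to be divisible by a high power of the associated linear form, thereby pinning the reconstruction and matching the dimension bookkeeping $\phi(\bn)-\phi(\bn-\bm e_i)=|\bn|-n_i$. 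A secondary difficulty is posed by the degenerate directions with $n_i=1$, where the identity above is unavailable; these cases require handling the surviving directions together with the partition-of-unity constant by a separate argument.
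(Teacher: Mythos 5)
Your preliminary reductions are sound: local linear independence gives $\dim\calV_\bn|_\triangle=\phi(\bn)$ for every triangle, so one inclusion suffices, and chaining along edge-adjacent triangles reduces the claim to a single shared edge. The genuine gap is precisely the step you yourself flag as ``the main obstacle,'' and as stated it is fatal rather than technical: your induction only yields the inclusions $D_{\bm e_i}\bigl(\calV_\bn|_\triangle\bigr)\subseteq\calV_{\bn-\bm e_i}|_\triangle$ with triangle-independent right-hand sides, but a subspace $V\subseteq\calP_n$ containing the constants is \emph{not} determined by the triple of images $\bigl(D_{\bm e_1}V,\,D_{\bm e_2}V,\,D_{\bm e_3}V\bigr)$. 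For instance, the four-dimensional spaces $V_\alpha=\spanset\bigl\{1,\;x^3+\alpha y^3,\;x^2y,\;xy^2\bigr\}\subseteq\calP_3$ satisfy $D_{\bm e_1}V_\alpha=D_{\bm e_2}V_\alpha=D_{\bm e_3}V_\alpha=\spanset\{x^2,xy,y^2\}$ for every $\alpha\notin\{0,-1\}$, yet they are pairwise distinct. What you would actually need is that the sets of compatible derivative pairs $\bigl\{(D_{\bm e_1}f,D_{\bm e_2}f)\colon f\in\calV_\bn|_\triangle\bigr\}$ agree for $\triangle$ and $\triangle'$; but since a polynomial is recovered from $(D_{\bm e_1}f,D_{\bm e_2}f)$ up to an additive constant, knowing this set is \emph{equivalent} to knowing the space itself, so this reformulation gains nothing --- it is the statement to be proved. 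The appeal to the super-smoothness \eqref{eq:Bn} and divisibility is a plan, not an argument. Moreover, when two entries of $\bn$ equal $1$, say $\bn=(n_1,1,1)$, only the $\bm e_1$-derivative stays inside the type-I family, a single image has an enormous kernel, and the ``separate argument'' you defer to is never supplied, so infinitely many cases are simply uncovered.

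For comparison, the paper's proof needs none of this machinery and, in particular, no induction on $\bn$. It picks $\ell$ so large that a single triangle $\tilde\triangle$ of the three-directional grid scaled by $2^\ell$ contains both $\triangle$ and $\triangle'$, and invokes the refinement (two-scale) equation, which writes each coarse box spline translate as a finite combination of fine translates. The coarse span $\calV^0_\bn|_{\tilde\triangle}$ is a space of polynomials, hence unchanged in dimension under further restriction to the open sets $\triangle$ or $\triangle'$; the refinement equation places these restrictions inside $\calV_\bn|_\triangle$ and $\calV_\bn|_{\triangle'}$ respectively, and since all spaces involved have the same dimension $\phi(\bn)$ by local linear independence, all of them coincide, giving $\calV_\bn|_\triangle=\calV^0_\bn|_{\tilde\triangle}=\calV_\bn|_{\triangle'}$ at once. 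If you wish to keep your induction scheme, the compatible-pair problem above is where all the remaining work lies; otherwise the refinement equation closes the proof in a few lines.
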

\begin{proof}
	Let $\triangle$ and $\triangle'$ be two triangles in the three-directional grid $G$. Denote by $G^0$ the triangulation of $\R^2$ obtained by the lines parallel to the vectors $\bm e_1'=(a,0)$, $\bm e_2'=(0,a)$ and $\bm e_1'=(a,a)$, for a fixed number $a\in \Z$. Notice that for any $a\in \Z$, we can see the grid $G$ as a refinement of a grid $G^0$. Denote by $\triangle_a$ the triangle in $G^0$ with vertices at $(0,0)$, $(0,a)$ and $(a,a)$. In particular, let us take $\ell\in \Z_+$ and $a=2^\ell$ in such a way that the translate $\tilde\triangle=\triangle_a - \bm v$ of $\triangle_a$ by a vector $\bm v\in \Z^2$ contains both $\triangle$ and $\triangle'$. Then, $B_\bn( 1/2^\ell\, \cdot )$ is the correspondent box spline associated to $\bn$ in the grid $G^0$.
	By the refinement equation for box splines \cite{LaiSchu}*{Theorem 12.9}, there exists a finite sequence $\{c_{\bm v}\}_{\bm v\in\Z^2}$ such that 
	\begin{equation}\label{eq:refeq}
		B_\bn( 1/2^\ell\, \cdot )=\sum_{\bm v\in\Z^2}c_{\bm v} B_\bn(\cdot - \bm v)\ .
	\end{equation}
	Let us denote by $\calV_\bn^0|_{\tilde\triangle}$ the span of the box spline translates $\calB_\bn(1/2^\ell\cdot-\bm v)$ restricted to $\tilde\triangle$ in $G^0$. By the symmetry of the box splines supports, the number $\phi(\bm n)$ of active translates on a triangle is independent of the grid and of the given triangle in the grid.  Furthermore, these translates are linearly independent  (see Remark \ref{rem:lind}). Thus,
	\begin{equation}\label{eq:dimV}
		\dim \calV_\bn^0|_{\tilde\triangle} = \dim  \calV_\bn|_{\triangle}= \dim  \calV_\bn|_{\triangle'}.
	\end{equation}
	Taking the restrictions to $\triangle$ and $\triangle'$, Equation \eqref{eq:refeq} implies $ \bigl(\calV_\bn^0|_{\tilde\triangle}\bigr)|_\triangle\subseteq \calV_\bn|_\triangle$ and 
	$ \bigl(\calV_\bn^0|_{\tilde\triangle}\bigr)|_{\triangle'}\subseteq \calV_\bn|_{\triangle'}$. 
	
	Since $\calV_\bn^0|_{\tilde\triangle}$ is a polynomial subspace of $\calP_n$, we have $\calV_\bn^0|_{\tilde\triangle}=(\calV_\bn^0|_{\tilde\triangle})|_\triangle =(\calV_\bn^0|_{\tilde\triangle})|_{\triangle'}$. In particular, both $(\calV_\bn^0|_{\tilde\triangle})|_\triangle $ and $(\calV_\bn^0|_{\tilde\triangle})|_{\triangle'}$  have the same dimension as $\calV_\bn^0|_{\tilde\triangle}$. The statement follows by applying Equation \eqref{eq:dimV}.
\end{proof}
Proposition \ref{prop:spaceVn} implies that for a given $\bm n\in\Z^3_{\geq 1}$ the restrictions of the translates of the box spline $\calB_{\bm n}$ to a triangle $\triangle\in T$ define a polynomial space which is independent of $\triangle$. From now on, we will denote such polynomial space as $\calV_\bn$. 

\begin{remark}\label{rem:span}
	For any $\triangle\in T$ an $\bn\in\ZZ_{\ge 1}^3$, Remark \ref{rem:lind} implies that $B_\bn(\hat\triangle)\subseteq \calP_n$ is a linearly independent set.
	Here $\calP_n$ is as before, the set of bivariate polynomials of degree at most $n=n_1+n_2+n_3-2$, and $\bn =(n_1,n_2,n_3)$. 
	The number of elements in $B_\bn(\hat\triangle)=\phi(\bn)$ (Equation \eqref{eq:phi}) is the dimension of the space of polynomials $\dim \calV_{\bn}$ associated to the type-I box $\calB_{\bm n}$. Thus, $\dim(\calV_{\bn})=n_1n_2+ n_1n_3+n_2n_3\leq \binom{n_1+n_2+n_3}{2}= \dim \calP_n$. 
	
	For $n_i\geq 1$, equality holds only for $\bn=\bm 1= (1,1,1)$, which corresponds to polynomial space $\calV_{\bm 1}$ associated to the Courant hat function $\calB_{\bm 1}$ in Definition \ref{def:convolution}.
	For any other $\bn\in \ZZ^3$ and the corresponding box spline $\calB_{\bm n}$, the polynomial space $\calV_{\bn}$ is a proper subspace of $\calP_\bn$.
\end{remark}	

\section{Characterization of box spline spaces}\label{sec:ccl}

Let $\triangle\in T$, $\bn\in \Z_{\geq 1}^3$, and consider $\calV_\bn= \spanset B_\bn(\hat\triangle)$. As observed in Remark \ref{rem:span}, the set $B_\bn (\hat\triangle)$ of the box spline translates with support on $\triangle$ is linearly independent, and hence the restriction of a polynomial $f\in \calV_\bn$ to $\triangle$ has a unique representation
\begin{equation}\label{eq:linearc}
	f|_\triangle(\bbx) = \sum_{\beta\in B_\bn(\hat\triangle)} \lambda_\triangle^\beta (f|_\triangle)\beta(\bbx), \quad \bbx\in\overline\triangle,
\end{equation}
for coefficients $\lambda_\triangle^\beta(f|_\triangle)\in\R$.

\begin{lemma}\label{lem:decomp1}
	Let $\bm n=(n_1,n_2,n_3)\in\Z^3_{\geq 1}$, $f$ and $f'$ in $\calV_\bn$, and  $M=\hat\triangle\cup\hat\triangle'$, such that  $\triangle, \triangle'\in T$ and
	$\overline\triangle\cap\overline\triangle'\in E_i$ for some $i\in\{1,2,3\}$.
	
	If $\bigl(f|_\triangle, f'|_{\triangle'}\bigr)\in\Splsp^{\bd}(M,\calV_\bn)$,  then there exist polynomials $g, h\in \calV_{\bm n}$ such that 
	\begin{equation}\label{eq:fdecomp}
		\bigl(f|_\triangle, f'|_{\triangle'}\bigr)=\bigl(g|_\triangle, f'|_{\triangle'}\bigr)+\bigl(h|_\triangle, 0|_{\triangle'}\bigr),
	\end{equation}
	with $\lambda_\triangle^\beta\bigl(g|_\triangle\bigr) =
	\lambda_{\triangle'}^\beta\bigl(f'|_{\triangle'}\bigr)$ for every  $\beta\in B_\bn(\hat\triangle)\cap B_\bn(\hat\triangle')$.
\end{lemma}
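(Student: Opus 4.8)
The plan is to organise the active box splines around the shared edge and to exploit the global supersmoothness recorded in \eqref{eq:Bn}. Write $\varepsilon=\overline\triangle\cap\overline\triangle'\in E_i$ and split the active translates on $M$ according to which of the two triangles carries them,
\[
S=B_\bn(\hat\triangle)\cap B_\bn(\hat\triangle'),\qquad A=B_\bn(\hat\triangle)\setminus S,\qquad B=B_\bn(\hat\triangle')\setminus S,
\]
so that $B_\bn(\hat\triangle)=S\sqcup A$ and $B_\bn(\hat\triangle')=S\sqcup B$. By Remark~\ref{rem:span} each of these two sets is a basis of $\calV_\bn$, and the coefficients $\lambda^\beta_\triangle(\cdot)$ and $\lambda^\beta_{\triangle'}(\cdot)$ in \eqref{eq:linearc} are precisely the coordinates in these bases. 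I would first record two structural facts. Since $\calB_\bn\in\SSS^\bd(G,\calP_n)$ by \eqref{eq:Bn}, the box spline is $C^{d_i}$ across every type-$i$ edge, so restricting any translate $\beta$ to $M$ gives $\beta|_M\in\SSS^\bd(M,\calV_\bn)$ (the conditions for the types $j\neq i$ are vacuous on $M$, which has a single, type-$i$, edge). Secondly, a translate $\beta\in A$ has $\triangle'$ outside its support, hence $\beta|_{\triangle'}\equiv0$ by the positivity/support property of $\calB_\bn$; symmetrically $\beta|_\triangle\equiv0$ for $\beta\in B$.

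With these in hand I would build $g$ directly out of $f'$. Consider the box-spline function
\[
\widehat{f'}=\sum_{\beta\in S\cup B}\lambda^\beta_{\triangle'}\bigl(f'|_{\triangle'}\bigr)\,\beta|_M\ \in\ \SSS^\bd(M,\calV_\bn),
\]
which is smooth as a linear combination of the $\beta|_M$. Restricting to $\triangle'$ recovers $f'|_{\triangle'}$, because $S\cup B=B_\bn(\hat\triangle')$ with the correct coordinates; restricting to $\triangle$ annihilates the $B$-translates and leaves $\widehat{f'}|_\triangle=\sum_{\beta\in S}\lambda^\beta_{\triangle'}(f'|_{\triangle'})\,\beta|_\triangle$. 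Defining $g\in\calV_\bn$ to be this polynomial yields $(g|_\triangle,f'|_{\triangle'})=\widehat{f'}\in\SSS^\bd(M,\calV_\bn)$, and reading off coordinates in the basis $B_\bn(\hat\triangle)=S\sqcup A$ gives exactly $\lambda^\beta_\triangle(g|_\triangle)=\lambda^\beta_{\triangle'}(f'|_{\triangle'})$ for $\beta\in S$ (and $0$ for $\beta\in A$), which is the required coefficient identity.

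Finally I would set $h=f-g\in\calV_\bn$. Then $f|_\triangle=g|_\triangle+h|_\triangle$ while the $\triangle'$-components cancel, so $(h|_\triangle,0|_{\triangle'})=(f|_\triangle,f'|_{\triangle'})-(g|_\triangle,f'|_{\triangle'})$ and \eqref{eq:fdecomp} holds. This is the only place the hypothesis is used: $(f|_\triangle,f'|_{\triangle'})\in\SSS^\bd(M,\calV_\bn)$ is given, $(g|_\triangle,f'|_{\triangle'})\in\SSS^\bd(M,\calV_\bn)$ was just produced, and since $\SSS^\bd(M,\calV_\bn)$ is cut out by linear smoothness conditions it is a vector space, whence the difference $(h|_\triangle,0)$ again lies in $\SSS^\bd(M,\calV_\bn)$.

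The one point deserving care, and which I expect to be the main obstacle, is the structural claim that $\beta|_M\in\SSS^\bd(M,\calV_\bn)$ for every active translate together with $\beta|_{\triangle'}\equiv0$ for $\beta\in A$. Both rest on the precise support-and-supersmoothness description of $\calB_\bn$: one must be certain that the $C^{d_i}$-continuity across type-$i$ edges asserted by \eqref{eq:Bn} is genuinely global, so that it also holds at edges lying on the \emph{boundary} of a translate's support — this is what guarantees that an $A$-translate, a nonzero polynomial on $\triangle$ but identically zero on $\triangle'$, still matches to order $d_i$ across $\varepsilon$. Once this is pinned down, the basis identifications, the explicit construction of $g$ from $\widehat{f'}$, and the vector-space closure yielding $(h|_\triangle,0)$ are entirely formal.
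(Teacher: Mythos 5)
Your proposal is correct and follows essentially the same route as the paper: you define $g$ by copying the coefficients $\lambda_{\triangle'}^\beta(f'|_{\triangle'})$ on the shared translates $\beta\in B_\bn(\hat\triangle)\cap B_\bn(\hat\triangle')$ and zero on the rest, then set $h=f-g$, which is exactly the paper's (much terser) construction. Your additional verification that $(g|_\triangle,f'|_{\triangle'})$ and hence $(h|_\triangle,0|_{\triangle'})$ lie in $\Splsp^{\bd}(M,\calV_\bn)$ goes beyond what the lemma literally asserts, but it is sound and is in fact what makes the lemma useful in the proof of Lemma~\ref{lemma:ccl}.
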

\begin{proof}
	Define
	\[g|_\triangle=\sum_{\beta\in B(\hat\triangle)\cap B(\hat\triangle')}\lambda_\triangle^\beta(f'|_\triangle) \beta|_\triangle + \sum_{\beta\in B(\hat\triangle)\setminus B(\hat\triangle')}0\cdot \beta|_\triangle.\]
	Then $g|_\triangle\in \calV_{\bm n}|_\triangle$, and extending by linearity we can see $g|_\triangle$ as the restriction to $\triangle$ of a polynomial $g$ in $\calV_{\bm n}\subseteq \R[x,y]$. By construction, the pair $(g|_\triangle,f'|_{\triangle'})$ satisfies the required condition, and taking $h=f-g$ we obtain Equation \eqref{eq:fdecomp}.
\end{proof}
\begin{lemma}\label{lemma:ccl}
	Let $\bm n=(n_1,n_2,n_3)\in\Z^3_{\geq 1}$, $f$ and $f'$ in $\calV_\bn$, and  $\triangle, \triangle'\in T$, such that 
	$\overline\triangle\cap\overline\triangle'\in E_i$ for some $i\in\{1,2,3\}$. Then, for $M=\hat\triangle\cup\hat\triangle'$ the following two statements are equivalent:
	\begin{enumerate}
		\item[(i)] The pair $(f|_\triangle, f'|_{\triangle'})\in \Splsp^{\bd}(M,\calV_{\bn})$, with $\bd=(n_2+n_3-2,n_1+n_3-2,n_1+n_2-2)$.
		\item[(ii)] For every $\beta\in B_\bn(\hat\triangle)\cap B_\bn(\hat\triangle')$, it holds $\lambda_\triangle^\beta(f|_\triangle) =
		\lambda_{\triangle'}^\beta(f'|_{\triangle'})$.
	\end{enumerate}
\end{lemma}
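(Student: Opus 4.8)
The plan is to establish the two implications separately; (ii)$\Rightarrow$(i) is routine, and (i)$\Rightarrow$(ii) carries the real content.

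For (ii)$\Rightarrow$(i), first note that by Equation \eqref{eq:Bn} every translate $\beta=\calB_\bn(\cdot-\bm v)$ lies in $\SSS^\bd(G,\calP_n)$, hence is globally $C^{d_i}$ across every edge of type $i$. If $\beta\in B_\bn(\hat\triangle)\cap B_\bn(\hat\triangle')$ is common, the pair $(\beta|_\triangle,\beta|_{\triangle'})$ is therefore $C^{d_i}$ across $\varepsilon$ for free. If $\beta$ is not common, its support is a union of whole cells containing exactly one of $\triangle,\triangle'$, so $\beta$ vanishes identically on the other cell; global $C^{d_i}$-smoothness across $\varepsilon$ then forces $D_\bs\beta$ to vanish on $\varepsilon$ for all $\bs\in I_i^\bd$, so the one-sided pair $(\beta|_\triangle,0)$ (or $(0,\beta|_{\triangle'})$) is again $C^{d_i}$ across $\varepsilon$. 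Expanding $f$ and $f'$ in the box-spline basis and using the coefficient equality (ii) on the common translates, $(f|_\triangle,f'|_{\triangle'})$ is a sum of pairs each $C^{d_i}$ across $\varepsilon$, and therefore lies in $\SSS^\bd(M,\calV_\bn)$.

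For (i)$\Rightarrow$(ii), I would invoke Lemma \ref{lem:decomp1} to write $(f|_\triangle,f'|_{\triangle'})=(g|_\triangle,f'|_{\triangle'})+(h|_\triangle,0)$, where $g$ agrees with $f'$ in its coefficients on the common translates and $h=f-g$. The first implication gives $(g|_\triangle,f'|_{\triangle'})\in\SSS^\bd(M,\calV_\bn)$, so subtracting from hypothesis (i) yields $(h|_\triangle,0)\in\SSS^\bd(M,\calV_\bn)$. Since the common coefficients of $g$ are those of $f'$, statement (ii) is equivalent to the vanishing of the coefficients of $h$ on the common translates. Now $(h|_\triangle,0)\in\SSS^\bd$ means exactly that $h|_\triangle$ together with its transversal derivatives of orders $0,\dots,d_i$ vanishes on $\varepsilon$; in other words $h$ lies in the kernel of the linear map $T$ on $\calV_\bn$ that sends a polynomial to the collection of these edge jets. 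By the first paragraph every non-common translate already lies in $\ker T$, and by Remark \ref{rem:span} the non-common translates span precisely the functions of $\calV_\bn$ whose common coefficients vanish; hence it is enough to prove that $T$ is injective on the span of the common translates. Indeed, decomposing $h\in\ker T$ along the box-spline basis and subtracting its (non-common) part, which already lies in $\ker T$, shows that the common part is annihilated by $T$, hence zero by injectivity, so the common coefficients of $h$ vanish.

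The main obstacle is this last injectivity: no nontrivial combination of the common box splines may vanish to transversal order $d_i$ along $\varepsilon$. To prove it I would exploit the Fourier-transform factorization of $\calB_\bn$ as an $n_i$-fold convolution along $\bm e_i$ times a transversal factor. Restricting to the line carrying $\varepsilon$ and reading off the transversal derivatives of orders $0,\dots,d_i$ expresses the relevant edge jets, as functions of the edge parameter, through integer shifts along $\bm e_i$ of a fixed family of univariate spline profiles of degree $n_i-1$. The classical local linear independence of integer translates of a univariate B-spline, assembled over the $d_i+1$ transversal orders, then gives the injectivity; the count matches because the number of common translates equals the total number of independent order-$d_i$ edge jets. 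I expect verifying this matching and the independence of the edge jets — for which the explicit Bernstein-B\'ezier coefficients of $\calB_\bn$ on $\triangle$ are the convenient bookkeeping tool — to be the technical heart of the argument.
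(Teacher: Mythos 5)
Your direction (ii)$\Rightarrow$(i) is correct and is essentially the argument in the paper: expand the pair in the box-spline basis, use that every translate restricted to $M$ lies in $\Splsp^{\bd}(M,\calV_\bn)$, and that a non-common translate paired with $0$ is just the restriction to $M$ of a globally smooth translate. Your reduction of (i)$\Rightarrow$(ii) also coincides with the paper's: invoke Lemma \ref{lem:decomp1} to pass to a pair $(h|_\triangle,0|_{\triangle'})$, and reformulate the goal as injectivity, on the span of the common translates, of the map sending a polynomial to its transversal jets of orders $0,\dots,d_i$ along $\varepsilon$ (equivalently, by Lemma 2.2 of \cite{bil}, divisibility of $h$ by $b_i^{\,n-n_i-1}$ with $n=|\bn|$).

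The gap is that this injectivity \emph{is} the entire content of the lemma, and your proposal does not prove it; the two load-bearing claims in your sketch are problematic. First, the ``matching count'': if ``the total number of independent order-$d_i$ edge jets'' means the dimension of the image of your map $T$ restricted to $\calV_\bn$, then the asserted equality with $\#\bigl(B_\bn(\hat\triangle)\cap B_\bn(\hat\triangle')\bigr)$ is logically equivalent to the injectivity you are trying to establish (since the non-common translates already lie in $\ker T$), so it cannot be used as an input; if instead it means the jet space of $\calP_n$, the count is false --- for $\bn=(2,2,2)$ and a type-3 edge there are $10$ common translates, while the jets of orders $0,1,2$ of quartics have dimension $5+4+3=12$. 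Second, local linear independence of integer translates of a univariate B-spline only controls shifts of a single profile \emph{along} the edge direction $\bm e_i$; the common translates also differ by shifts \emph{transversal} to $\varepsilon$, their jets of a fixed order are not integer shifts of one fixed profile (already for the Courant hat the edge restrictions are $1-t$ and $t$, of degree $1$, not $n_3-1=0$), and decoupling these transversal ``rows'' across the $d_i+1$ jet orders is precisely the hard part, which your phrase ``assembled over the $d_i+1$ transversal orders'' leaves unaddressed. The paper resolves exactly this point by a different mechanism: induction on $|\bn|$, where the differentiation/difference formula \eqref{eq:derivative} converts the divisibility hypothesis into the same hypothesis for a lower-order box spline, forcing the coefficients $\lambda_{\bm v}(f)$ to be constant on $\Sup_{\bm n}(\hat\varepsilon_i)$, and then the partition of unity together with $f|_{\varepsilon_i}=0$ forces that constant to be zero. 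That induction is, in effect, the deconvolution your Fourier sketch is reaching for; to complete your route you would need an argument of comparable substance for the transversal decoupling, and none is given.
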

\begin{proof} 
	(ii) $\Rightarrow$ (i) Let $(f,f')\in \calV_\bn$ be a pair of polynomials satisfying (ii). Then
	\begin{equation}
		(f|_\triangle,f'|_{\triangle'}) = \sum_{\beta\in B_\bn(M)}\lambda_\triangle^\beta(f|_{\triangle})\beta|_{\triangle\cup \triangle'} + ( h|_{\triangle}, 0|_\triangle)  +(0|_\triangle, h'|_{\triangle'}),\label{eq:decomp2}
	\end{equation}
	where
	\[h|_{\triangle} =\hspace{-0.5cm} \sum_{\beta\in B_\bn(\hat\triangle)\setminus B_\bn(\hat \triangle')} \hspace{-0.5cm}\lambda_\triangle^\beta (f|_{\triangle})\beta|_\triangle\ , 
	\text{\, and \; } 
	h'|_{\triangle'} =\hspace{-0.5cm}\sum_{\beta\in B_\bn(\hat\triangle')\setminus B_\bn(\hat \triangle)} \hspace{-0.5cm} \lambda_\triangle^\beta (f'|_{\triangle'})\beta|_\triangle.\]
	
	Since $\beta\in \Splsp^{\bd}(M,\calV_\bn)$
	for every $\beta\in B_\bn(M)$, then the first term in Equation \eqref{eq:decomp2} is in $\Splsp^{\bd}(M,\calV_\bn)$. Now, if $\beta\in B(\hat{\triangle})\setminus B(\hat{\triangle'})$ then $(\beta|_\triangle, 0|_{\triangle'})=\beta|_{M}$, and similarly for  $\beta\in B(\hat{\triangle'})\setminus B(\hat{\triangle})$. Hence, the last two terms in Equation \eqref{eq:decomp2} are also in $\Splsp^{\bd}(M,\calV_\bn)$, and (i) follows.
	
	\medskip
	
	(i) $\Rightarrow$ (ii) By Lemma \ref{lem:decomp1} it is enough to consider $(f|_\triangle, 0|_{\triangle'})\in \Splsp^{\bd}(M,\calV_{\bm n})$. We need to show that $\lambda_\triangle^\beta(f|_\triangle)=0$ for every $\beta\in B(\hat{\triangle})\cap B(\hat{\triangle'})$. To simplify, by an abuse of notation we will denote $\Sup_{\bm n}(\hat{\triangle})\cap \Sup_{\bm n}(\hat{\triangle'})$ simply by $\Sup_{\bm n}(\hat \varepsilon_3 )$.
	
	\medskip
	
	Let $\overline \triangle\cap \overline\triangle'=\varepsilon_i\in E_i$, and denote by $b_{i}$ the barycentric coordinate relative to $\triangle$ which vanishes at the edge $\varepsilon_i$.
	Let $n=n_1+n_2+n_3$.
	Then  $(f|_\triangle, 0|_{\triangle'})\in  \Splsp^{\bd}(M,\calV_{\bm n})$ if and only if the polynomial $b_i^{n-n_i-1}$ divides  $f$, see \cite{bil}*{Lemma 2.2}. Notice that the latter condition is satisfied if and only if the Bernstein-B\'ezier coefficients $c_{j k \ell}$ of $f|_{\triangle}$ are zero for every $0\leq j\leq n-n_1-2$. 
	
	\medskip
	
	Without loss of generality we can assume that $i=3$ and $\triangle$ is the triangle with vertices at $(0,0), (1,0)$, and $(1,1)$. Thus, $b_3=x-y$. We will prove that if $f\in \calV_{\bm n}$ and $b_3^{n-n_3-1}$ divides  $f$ then $\lambda_\triangle^\beta(f|_\triangle)=0$ for every $\beta\in B(\hat{\triangle})\cap B(\hat \triangle')$.
	We proceed by induction on $n$, with $n_i\geq 1$.
	
	\medskip
	
	The induction base is $n=3$, with $\bn =(1,1,1)$, $n_3=1$ and $n-n_3-1=1$. We have
	\begin{align*}
		B_{\bn} (\hat\triangle\cup \hat\triangle')&=
		\{\beta_j\}_{j=0}^3,
	\end{align*}
	with $\beta_0=\calB_{\bn}$, and $\beta_i= \calB_{\bn}(\cdot +\bm  e_i)$ for $i=1,2,3$. Thus, the only translates with support on $\triangle$ are $\beta_0,\beta_2$ and $\beta_3$. Since $\beta_0=b_1$, $\beta_2=b_3$, and $\beta_3=b_2$, then 
	\begin{align*}
		f|_\triangle& = \lambda^{\beta_0}_\triangle (f|_\triangle) b_1|_\triangle + \lambda^{\beta_2}_\triangle (f|_\triangle) b_3|_\triangle +  \lambda^{\beta_3}_\triangle (f|_\triangle) b_2|_\triangle.
	\end{align*} 
	Since the polynomials $b_1,b_2,b_3$ are linearly independent, and by hypothesis $b_1|f$, it follows $\lambda^{\beta_i}_\triangle (f|_\triangle)=0$ for $i=2,3$. This proves the statement for the case $n=3$. 
	
	Let us assume that the result is true for every $m\leq n$ for some  $n\geq 3$. Let $\bm n =(n_1,n_2,n_3)$ with  $n+1 = n_1+n_2+n_3$.
	We consider three cases, in the first two cases only one of the indices $n_i$ is $\geq 2$ and in the last case at least two of the indexes are $\geq 2$.
	\begin{description}
		\item[\emph{Case 1.}] 
		Suppose $n_1\geq 2$, and $n_2=n_3 = 1$. 
		
		\noindent We have $(x-y)^{n_1}|f$ and $D_{\bm e_1}f\equiv 0 \mod (x-y)^{n_1-1}$. Notice that Equation \eqref{eq:linearc} may be rewritten as
		\begin{equation}\label{eq:rewritef}
			f|_\triangle=\sum_{\bm v\in\Z^2} \lambda_{\bm v}(f)\calB_{\bm n}(\cdot -\bm v)\bigr|_\triangle, 
		\end{equation}
		where $\lambda_{\bm v}=0$ for every $\bm v\notin \Sup_{\bm n}(\triangle)$, and $\lambda_{\bm v}(f)=\lambda_\triangle^\beta(f|_\triangle)$ for $\beta\in B(\hat{\triangle})$ and $\beta=\calB_{\bm n}(\cdot -\bm v)$.

		\noindent Moreover, we know that for any linear combination of box splines
		\begin{align}
			D_{\bm e_i} \sum_{\bm v\in\Z^2} a_{\bm v}\calB_{\bm n}(\cdot -\bm v)& = 
			\sum_{\bm v\in\Z^2} a_{\bm v}\bigl(\calB_{\bm m}(\cdot -\bm v)- \calB_{\bm m}(\cdot -\bm v-\bm e_1)\bigr) \nonumber\\
			& = \sum_{\bm v\in\Z^2}(a_{\bm v+\bm e_i}- a_{\bm v}) \calB_{\bm m}(\cdot -\bm v +\bm e_i), \label{eq:derivative}
		\end{align} 
		for $a_{\bm v}\in\R$ for every $\bm v\in\Z^2$, and $\bm m =(n_1-1,n_2, n_3)$, see \cite{LaiSchu}*{Lemma 12.3}.
		
		\noindent Thus, 
		\begin{align*}
			D_{\bm e_1} \sum_{\bm v\in\Z^2} \lambda_{\bm v}(f)\calB_{\bm n}(\cdot -\bm v)\bigr|_\triangle &= \sum_{\bm v\in\Z^2}\bigl(\lambda_{\bm v+\bm e_1}(f)- \lambda_{\bm v}(f)\bigr) \calB_{\bm m}(\cdot -\bm v +\bm e_1)\bigr|_\triangle\\
			&\equiv 0 \mod (x-y)^{n_1-1},
		\end{align*} 
		where $\bm m=(n_1-1,1,1)$.
		By induction hypothesis $\lambda_{\bm v+\bm e_1}(f)= \lambda_{\bm v}(f)$ for every $\bm v\in \Sup_{\bm m}(\hat\varepsilon_3)$. 
		
		\noindent Notice that, since $\bm n =(n_1,1,1)$ then the elements $\bm v\in \Sup_{\bm v}(\hat\triangle)$ are either a multiple $t\bm e_1$ of $\bm e_1$ or of the form $\bm e_2+ t\bm e_1$, for $t\in \Z$. Thus, we have
		\[
		f|_\triangle= \lambda_{\bm e_1}(f)\sum_{t\in\Z}\calB_{\bm n}(\cdot -t\bm e_1)|_\triangle +\lambda_{\bm e_2}(f)\sum_{t\in\Z} \calB_{\bm n}(\cdot -\bm e_2 +t\bm e_1)|_\triangle , 
		\]
		for constants $\lambda_{\bm e_i}(f)\in\R$, for $i=1,2$.
		Moreover, $\calB_{\bm n}(\cdot -t\bm e_1)|_{\overline\varepsilon_1}=0$ for every $t$. In particular, they are zero at the vertex $(0,0)$. But $f(0,0)=0$, and 
		\[1=\sum_{\bm v\in\Z^2}\calB_{\bm n}(\cdot -\bm v)=\sum_{t\in\Z} \calB_{\bm n}(\cdot -t\bm e_1) + \calB_{\bm n}(\cdot-\bm e_2+t\bm e_1).\]
		Then $f|_{(0,0)}=\lambda_{\bm e_2}(f)\sum_{t\in\Z} \calB_{\bm n}(\cdot -\bm e_2 +t\bm e_1)|_{(0,0)}=\lambda_{\bm e_2}(f)=0$. 
		We obtain $\lambda_{\bm e_1}(f)=0$ by considering the restriction at the vertex $(1,1)$.
		
		\medskip
		
		\item[\emph{Case 2.}]
		Suppose $n_3\geq 2$ and $n_1=n_2=1$.
		
		By hypothesis $(x-y)|f$, and so $D_{\bm e_3}f\equiv 0 \mod (x-y)$.
		Following the same argument as above, $\lambda_{\bm v+\bm e_3}(f)=\lambda_{\bm v}(f)$ for every $\bm v\in \Sup_{\bm n}(\hat\varepsilon_3)$.
		Thus, there is a constant $\lambda_{\bm e_3}$ which is equal to $\lambda_{\bm v}(f)$ for every $\bm v\in \Sup_{\bm n}(\hat\varepsilon_3)$, and
		\[
		f|_{\varepsilon_3}= \lambda_{\bm e_3}(f)\sum_{\bm v\in\Z^2}\calB_{\bm n}(\cdot-\bm v)\bigr|_{\varepsilon_3}.
		\] 
		Since $f|_{\varepsilon_3}=0$,  and $\sum_{\bm v\in\Z^2}\calB_{\bm n}(\cdot-\bm v)\bigr|_{\varepsilon_3}=1$, it follows $\lambda_{\bm e_3}(f)=0$.
		
		\medskip
		
		\item[\emph{Case 3.}] At least two of the indices $n_i$ are $\geq 2$, say $n_1,n_2\geq 2$. 
		By hypothesis $(x-y)^{n_1+n_2-1}|f$, and so $D_{e_i} f \equiv 0 \mod (x-y)^{n_1+n_2-2}$ for $i=1,2$. Similarly as before, rewrite $f|_\triangle$ as in Equation \eqref{eq:rewritef}, and consider Equation \eqref{eq:derivative} with $i=1$. Thus,
		\begin{align*}
			D_{\bm e_1} \sum_{\bm v\in\Z^2} \lambda_{\bm v}(f)\calB_{\bm n}(\cdot -\bm v)\bigr|_\triangle &= \sum_{\bm v\in\Z^2}\bigl(\lambda_{\bm v+\bm e_1}(f)- \lambda_{\bm v}(f)\bigr) \calB_{\bm m}(\cdot -\bm v +\bm e_1)\bigr|_\triangle\\
			&\equiv 0 \mod (x-y)^{n_1+n_2-2}.
		\end{align*} 
		By induction hypothesis $\lambda_{\bm v+\bm e_1}(f)= \lambda_{\bm v}(f)$ for every $\bm v\in \Sup_{\bm m}(\hat\varepsilon_3)$. 
		Similarly, by considering $D_{\bm e_2}f$ we get $\lambda_{\bm v+\bm e_2}(f)=\lambda_{\bm v}(f)$ for every $\bm v\in \Sup_{\bm m'}(\hat\varepsilon_3)$, where $\bm m'=(n_1,n_2-1, n_3)$.
		This implies that the coefficients $\lambda_{\bm v}(f)$ are equal to a constant $\lambda$, for every $\bm v \in\Sup_{\bm n}(\hat\varepsilon_3)$.
		
		\noindent Therefore, $f|_{\varepsilon_3}= \lambda\sum_{\bm v\in\Z^2}\calB_{\bm n}(\cdot -\bm v)|_{\varepsilon_3}$. But $f|_{\varepsilon_3}=0$, and $\sum_{\bm v\in\Z^2}\calB_{\bm n}(\cdot -\bm v)=1$, in particular when taking the restriction to $\varepsilon_3$. Hence, $0=\lambda=\lambda_\triangle^\beta(f)$, for every $\beta\in B(\hat\triangle)\cap B(\hat{\triangle'})$.
	\end{description}	
	An analogous proof applies when the edge of intersection between $\overline{\triangle}$ and $\overline{\triangle'}$ is parallel to one of the other two vectors $\bm e_1$ or $\bm e_2$.
\end{proof}

The edge-contact property plays a fundamental role for the construction of hierarchical spline spaces, we present that construction for any type-I box spline in Section \ref{sec:H}. 
The following examples illustrate that importance of the properties of type-I box splines used to prove Lemma \ref{lemma:ccl}, we show that not every spline space possesses those properties.
\begin{example}\label{ex:two}
	Let $M=\hat\triangle\cup\hat\triangle'$ be the multicell domain in Figure~\ref{fig:twotr}. 
	\begin{figure}[ht]
		\centering
		\includegraphics[width=5cm]{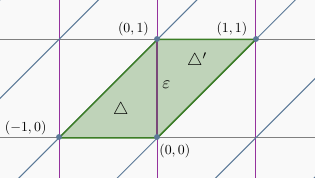}
		\caption{The triangles $\triangle$ and $\triangle'$ in $G$ define the multicell domain $M$ in Example \ref{ex:two}, the triangles share the edge $\varepsilon\in E_2$ that is parallel to the directional vector $\bm e_2$.}\label{fig:twotr}
	\end{figure}
	
	Take the set $B=\{f_1, \dots, f_4\}$ of spline functions $f_i$ on $M$ defined as 
	\[	f_1 = \bigl(y^2 + 3x^2 + 4x,\, y^2 + x \bigr)\ , \; 	f_2 = \bigl(4y^2,\, 4y^2\bigr)\ ,\]
	\[ f_3 = \bigl(x^2,\, 0\bigr)\ , \text{ and }  f_4 = \bigl(0,\, x^2\bigr)\ .\]
	Each pair of polynomials $f_i$ is defined on $\triangle$ and $\triangle'$, respectively. Then, $B$ is contained in the space of splines $ \SSS^{\bm 0}(M,\calQ)$, where $\bm 0=(0,0,0)$, and $\calQ\subseteq \calP_2$ is the linear subspace of polynomial spanned by $x$, $x^2$, and $y^2$.
	
	We have
	\begin{align*}
		\calQ|_{\triangle} \;  = B|_{\triangle} \;  &= \bigl\{y^2 + 3x^2 + 4x,\, 4y^2,\, x^2\bigr\}\bigl|_{\triangle} \ , \text{ and }\\
		\calQ|_{\triangle'} = B|_{\triangle'} &= \bigl\{y^2 + x,\, 4y^2,\, x^2\bigr\}\bigl|_{\triangle'}.
	\end{align*}
	Then, $g=\bigl(5\cdot(y^2 + 3x^2 + 4x) + 4y^2,\,  y^2 + x + 2 \cdot (4y^2)\bigr)\in \SSS^{\bm 0}(M,\calQ)$, but $g$ is not an element in $\spanset(B)$. \hfill $\Diamond$
\end{example}

The following example illustrates the importance of the spline space $\SSS^\bd (M, \calV_\bn)$ (Definition \ref{def:edge-splines}) in the proof of Lemma \ref{lemma:ccl}.

\begin{example}
	Let $M=\hat\triangle\cup\hat\triangle'$ be the multicell domain defined from the triangles $\triangle$ and $\triangle'$ such that $\triangle\cap\triangle'=\varepsilon\in E_3$ in Figure \ref{fig:example2}.
	Take $\bn=(2,1,1)$, and consider the linear space $\calV_\bn= \spanset B_\bn(\hat{\triangle})$.  
	The support and Bernstein coefficients of the box spline $\calB_{(2,1,1)}$ are displayed in Figure \ref{fig:example2}. The generators of $\calV_{(2,1,1)}$ can easily be described by restricting the translates $\calB_{(2,1,1)}(\cdot -\bm v)\in B_\bn(M)$ to $\triangle'$ and using the Bernstein coefficients of $\calB_{(2,1,1)}$. Namely, $\calV_{(2,1,1)}$ is generated by 
	\[f_1 = 2(x-y)y +y^2\ ; \;		f_2 = 2(1-x)y +y^2\ ;\]
	\[f_3 = (x-y)^2\ ; \]
	\[f_4= (1-x)^2+4(1-x)(x-y) + (x-y)^2 + 2(1-x)y +2(x-y)y\ ;\] 
	\[\text{and } \; f_5= (1-x)^2\ . \]
	In particular, \[f= 4(x-y)y+4(1-x)(x-y)+(x-y)^2= f_1 - f_2 + f_4 - f_5 \in \calV_{(2,1,1)} \ .\]
	Let us notice that $g=\bigl(0|_{\triangle},f|_{\triangle'}\bigr)\in \SSS^{\bm 0}(M,\calV_\bn) $, but $g\notin \spanset \calB_\bn (M)$. In fact,  for $\beta(\cdot)=\calB_\bn(\cdot)$ we have $\beta|_{\triangle'}=f_1$, and $\lambda^\beta_{\triangle'}(f|_{\triangle'})=1$, but $\lambda^\beta_{\triangle}(0|_{\triangle})=0$. 
	
	On the other hand, $\varepsilon\in E_3$ i.e., $\varepsilon$ is an edge parallel to the directional vector $\bm e_3$,  and $g$ is not a $C^1$-continuous spline on $M$. Thus,   $g \notin\SSS^\bd(M,\calV_\bn)$ for $\bd=(0,1,1)$.\hfill $\Diamond$
	
\end{example}

\begin{figure}
	\centering
	\includegraphics[width=5cm]{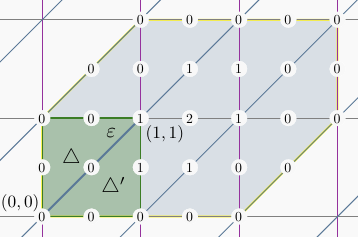}
	\caption{Bernstein coefficients of the box spline $2\calB_{(2,1,1)}$.}\label{fig:example2}
\end{figure}

\begin{definition}\label{def:admissible}
	A multicell domain $M$ is \emph{admissible} with respect to a type-I box spline
	$\calB_\bn$ if the support of any $\beta \in B_\bn(M)$ is a
	connected set, and there exist no over-concave vertices (Definition \ref{def:oconcave}), and no kissing triangles in $M$ (Definition \ref{def:kissing}).
\end{definition}

In view of Lemma~\ref{lemma:ccl} we arrive to the following completeness result. 

\begin{corollary}[Completeness of type-I box splines]\label{cor:compl}
	If $M\subseteq G$ is an admissible domain, and $\bm n=(n_1,n_2,n_3)\in\ZZ^3_{\ge 1}$, then the generating set $B_\bn(M)$ is complete for
	$\Splsp^{\bd(\bn)}(M,\calV_{\bm n})$, where $\bd (\bn)=(n_2 +n_3 - 2, n_1 +n_3 - 2, n_1 +  n_2 - 2)$.
\end{corollary}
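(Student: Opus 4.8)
The plan is to establish the two inclusions $\spanset B_\bn(M) \subseteq \Splsp^{\bd}(M,\calV_\bn)$ and $\Splsp^{\bd}(M,\calV_\bn) \subseteq \spanset B_\bn(M)$, where I abbreviate $\bd = \bd(\bn)$. The first inclusion is immediate: every translate $\calB_\bn(\cdot - \bm v)$ lies in $\Splsp^\bd(G,\calP_n)$ by Equation \eqref{eq:Bn} and translation invariance of the grid, so its restriction to $M$ lies in $\Splsp^\bd(M,\calV_\bn)$ (each polynomial piece lies in $\calV_\bn$ by the very definition of $\calV_\bn = \spanset B_\bn(\hat\triangle)$), and $\Splsp^\bd(M,\calV_\bn)$ is a linear space. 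The content of the corollary is therefore the reverse inclusion, namely that every spline with edge smoothness $\bd$ is a linear combination of active box spline translates.

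For the reverse inclusion I would take $f \in \Splsp^\bd(M,\calV_\bn)$ and reconstruct it from its triangle-wise expansions. On each triangle $\triangle \in M$ the restriction $f|_\triangle$ lies in $\calV_\bn|_\triangle$ and hence, by the local linear independence recorded in Remark \ref{rem:lind}, admits the unique representation \eqref{eq:linearc} with coefficients $\lambda_\triangle^\beta(f|_\triangle)$ indexed by $\beta \in B_\bn(\hat\triangle)$. The goal is to show that for each fixed translate $\beta \in B_\bn(M)$ the value $\lambda_\triangle^\beta(f|_\triangle)$ is independent of which triangle $\triangle \subseteq \supp(\beta)\cap M^*$ is used to read it off. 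Calling this common value $c_\beta$, the spline $g = \sum_{\beta \in B_\bn(M)} c_\beta\,\beta$ then agrees with $f$ on every triangle by uniqueness of the expansion, which yields $f \in \spanset B_\bn(M)$ and closes the argument.

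The consistency of the coefficients across a single shared edge is exactly the equivalence (i)$\Leftrightarrow$(ii) of Lemma \ref{lemma:ccl}: whenever $\triangle$ and $\triangle'$ meet in an edge of type $i$, the pair $(f|_\triangle, f|_{\triangle'})$ inherits the edge smoothness $\bd$ from $f$, hence lies in $\Splsp^\bd(\hat\triangle\cup\hat\triangle',\calV_\bn)$, and therefore $\lambda_\triangle^\beta(f|_\triangle) = \lambda_{\triangle'}^\beta(f|_{\triangle'})$ for every common translate $\beta \in B_\bn(\hat\triangle)\cap B_\bn(\hat\triangle')$. To promote this edge-by-edge agreement to a single well-defined $c_\beta$, I would show that the triangles of $M$ lying in $\supp(\beta)$ form an \emph{edge-connected} set and then chain the equality of Lemma \ref{lemma:ccl} along an edge-connected path joining any two of them. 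This is precisely where admissibility (Definition \ref{def:admissible}) enters: the connectedness of $\supp(\beta)\cap M^*$ together with the absence of kissing triangles and of over-concave boundary vertices guarantees that the active triangles of $\beta$ in $M$ are not merely topologically connected but edge-connected, so no two of them touch only at a vertex where Lemma \ref{lemma:ccl} has no content.

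The main obstacle is exactly this final propagation step. Lemma \ref{lemma:ccl} controls only pairs sharing a full edge, so without a hypothesis ruling out vertex-only contacts the coefficient of a translate could jump between two parts of $M$ that $\supp(\beta)$ joins solely through a kissing vertex or across an over-concave vertex; the examples following Lemma \ref{lemma:ccl} already show that the edge conditions alone need not force the span to fill the whole space in such configurations. I would therefore devote the bulk of the proof to verifying that, under admissibility, the active triangles of each $\beta$ admit an edge-connected chain inside $M$ — invoking, where convenient, Proposition \ref{prop:adm1}, which shows that on such domains $\Splsp^\bd(M,\calV_\bn) = \hat\Splsp^\bd(M,\calV_\bn)$, so that the vertex-contact conditions are automatically subsumed by the edge conditions — and then conclude via the gluing construction described above.
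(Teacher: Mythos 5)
Your proposal is correct and is essentially the argument the paper intends: the paper states the corollary as following directly from Lemma~\ref{lemma:ccl}, with admissibility (Definition~\ref{def:admissible}) serving exactly the role you give it, namely guaranteeing that the active triangles of each translate $\beta$ are edge-connected inside $\supp(\beta)\cap M^*$ so that the coefficient identity of Lemma~\ref{lemma:ccl} can be chained across shared edges to produce a single well-defined coefficient $c_\beta$. One inessential caveat: your parenthetical appeal to Proposition~\ref{prop:adm1} adds nothing, since the paper proves no vertex-contact analogue of Lemma~\ref{lemma:ccl} (it is only verified algorithmically for low degrees), so strong regularity cannot be exploited --- but your main edge-chain argument never actually needs it.
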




\section{Hierarchical type-I box splines}\label{sec:H}
In this section, to any given box spline $\calB_\bn$ we associate a space of \emph{special splines} defined on a hierarchical grid. We construct a hierarchical basis for such space and prove that this basis is complete under certain assumptions on the domain hierarchy.

For an integer $N\geq 0$, we recursively define a sequence of three-directional grids $G^1, G^2$, $...,G^N$ as follows. We take $G^1=G$ as the three-directional grid with vertices $\Z^2$ introduced in Section
\ref{sec:prelim}, Figure \ref{fig:grid}. The hierarchical grids 
\[
G^\ell \quad\text{for\; } \ell=2,\dots,  N
\]
are defined recursively, in such a way that $G^{\ell+1}$ is obtained from $G^\ell$ by one global, uniform dyadic refinement step. 
More precisely, the grid $G^{\ell+1}=\frac12G^\ell$ is the triangulation of $\R^2$ with vertices at points $1/2^\ell(k,k)$, obtained by drawing in the lines $x=k/2^{\ell}$,  $y=k/2^{\ell}$, and $x-y=k/2^{\ell}$, for all $k\in\ZZ$.
Thus, to construct $G^{\ell+1}$, every triangle in the grid $G^\ell$ is split into four smaller ones, as illustrated in Figure~\ref{fig:grids}. 
The index $\ell$ will be called the {\em level} of the grid, and the number $N$ specifies the number thereof. Each grid $G^\ell$ is a uniform three-directional grid, similarly as for the grid $G=G^1$ before, we consider the triangles in this grid as open sets in $\R^2$.

\begin{figure}[!ht]
	\centering
	\includegraphics[scale=0.8]{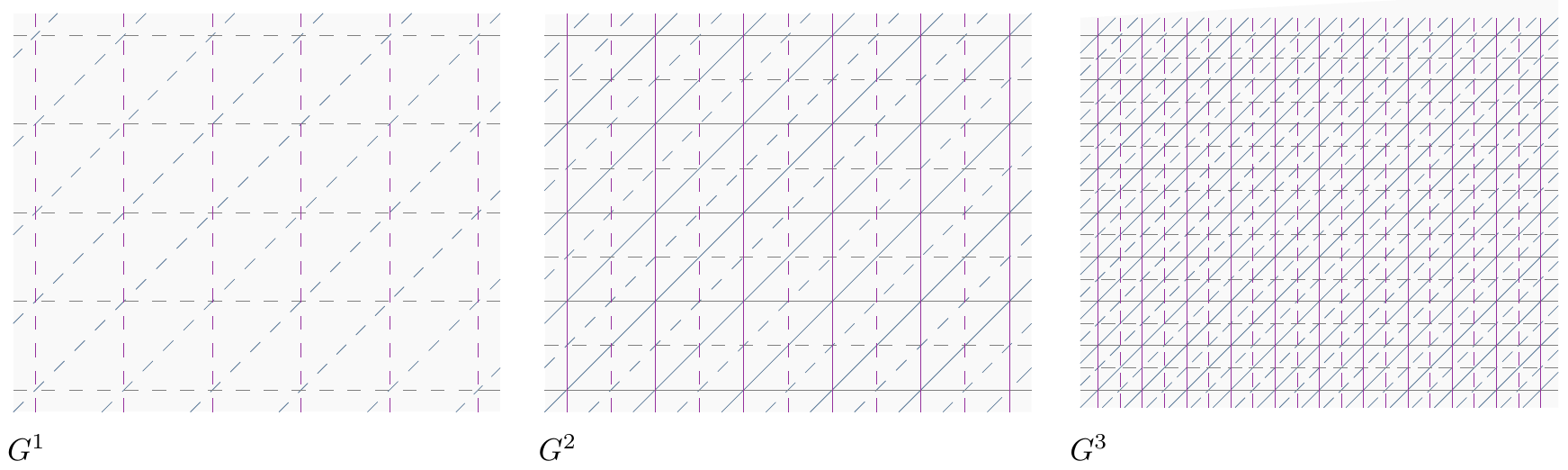}
	\caption{Three levels of three-directional hierarchical grids.}\label{fig:grids}
\end{figure}

Let $\Omega$ be a domain of $\R^2$ whose boundary $\partial\Omega$ is the union of edges from the grid $G^N$. We define a hierarchical multicell domain $H$ associated to the domain $\Omega$ as follows.

A \emph{nested sequence of subdomains} of $\Omega$ is defined as a collection of domains $\calM^\ell$ such that   
\[
\emptyset=\calM^0\subseteq\calM^1\subseteq\cdots\subseteq\calM^N=\Omega, 
\]
where $\displaystyle\calM^\ell= \bigcup_{\triangle\in M^\ell}\overline\triangle$, and $M^\ell\subseteq G^\ell$ is a multicell domain in the grid $G^\ell$, for  for each $\ell= 1,\dots, N$.

Thus, for each level $\ell$, the boundary $\partial\calM^\ell$ is a union of edges of the grid $G^\ell$. The difference between two successive subdomains, denoted $\calD^\ell$, is defined as the closure 
\[
\calD^\ell = \overline{\calM^\ell\setminus \calM^{\ell -1}}\ .
\]
The associated refined domain of level $D^\ell\subseteq G^\ell$ is defined as
$
D^\ell = T^\ell (\calD^\ell), 
$
where $T^\ell(\cdot)$ is the triangulation operator which restrict the grid $G^\ell$ to a given subset of the plane $\R^2$. More precisely, 
\[T^\ell(\calQ)=\bigl\{\hat\triangle\in G^\ell\colon \triangle\subset\calQ\bigr\}.\] 
The \emph{hierarchical multicell domain} $H$ associated to $\Omega$ is then the collection of triangles form all levels of the refinement area
\[
H= \bigcup_{\ell=1}^N D^\ell.\]
Using this notation, the domain $\Omega$ can be written as the union 
$
\Omega = \bigcup_{\triangle\in H} \overline\triangle\ .
$
\begin{definition}
	Let $H$ be a three-directional hierarchical multicell domain associated to a domain $\Omega\subseteq \R^2$, and let $\calB_n$ be a box spline, for some triple $\bn=(n_1,n_2,n_3)\in \Z^3_{\geq 1}$. We define $\PP(H, \calV_\bn)$ as the set of piecewise polynomial functions on $H$ associated to $\calB_n$ i.e., 
	\begin{equation*}
		\PP(H, \calV_\bn) = 
		\bigl\{
		f\in C^0(\Omega) \colon   f|_\triangle\in\calV_\bn|_\triangle \text{\, for each triangle } \triangle\in H
		\bigr\}\ .
	\end{equation*}
	If $\bd=(n_2+n_3-2,n_1+n_3-2,n_1+n_2-2)$, the \emph{hierarchical box spline space with edge smoothness $\bd$ on $H$} is defined as the set 
	%
	\begin{multline}\label{eq:HH}
		\SSS^\bd(H,\calV_\bn) = \bigl\{ f\in\PP(H,\calV_\bn) \colon
		f|_{\eddi(\varepsilon)}\in C^{d_i}\left({\eddi(\varepsilon)^*}\right)
		\text{\, for every\, }\\ \varepsilon\in E_i\cap H
		\text{ and \;} i \in\{ 1,2, 3\}
		\bigr\}\ .
	\end{multline}
	For $\bd$ as above, we define the linear space of \emph{hierarchical box splines with edge and vertex smoothness $\bd$ on the hierarchical multicell domain $H$} as the set 
	\begin{multline}\label{eq:HEV}
		\hat{\SSS}^\bd(H, \calV_\bn) = \biggl\{f\in\PP(H,\calV_\bn)\,\colon\,
		f|_{U}\in\DD_{I}(U,\calV_\bn) 
		\text{ for }  \triangle,\triangle'\in M^\ell,\\ 
		\text{ for some } \ell, \, \triangle\cap\triangle'\neq \emptyset\ ,\, 
		U = \overline\triangle\cup\overline\triangle' \, , 
		{\mbox{ and $I=\bigcap_{i\in \ST(\triangle,\triangle')} I_{i}^\bd$}}
		\biggr\}.
	\end{multline}
\end{definition}
In Equation \eqref{eq:HH}, the diamond of and edge $\varepsilon \in H$ is taken over the multicell domain $M^\ell$ such that $\varepsilon$ is an edge of the a triangle $\triangle\in M^\ell$. 
Namely,  
\[\displaystyle\eddi(\varepsilon)=\bigcup_{\triangle\in M^\ell, \varepsilon\in\hat\triangle}\hat\triangle\ .\]
Let us fix $\bn=(n_1,n_2,n_3)\in\ZZ_{\ge 1}^3$ and the corresponding box spline $\calB_{\bn}$. 

For each level $\ell=1,\dots,N$ we denote by $B_\bn^\ell$ the set of translates of $\calB_{\bn}$ with respect to the grid $G^\ell$.  
In particular, we have $B_\bn^1=B_\bn(G)$ as defined in Equation (\ref{eq:translates}), and recursively
\[
B_\bn^{\ell+1}=\bigl\{\beta(2\cdot\;)\colon \beta\in B_\bn^\ell\bigr\}\ .
\]
In an analogous way as we introduce $B_\bn(M)$ in Definition \ref{def:active} for a multicell domain $M$ in the three-directional grid $G$, we now define the set $B_\bn^\ell({M^\ell})$ of active box splines on the multicell domain $M^\ell$ in $G^\ell$ as follows,  
\[
B_\bn^\ell({M^\ell})=\bigl\{\beta|_{M^\ell}\colon \beta\in B_\bn^\ell\ , \text{ and } \supp \beta\cap \calM^\ell\neq \emptyset\bigr\}\ .
\]
A set of linearly independent box splines on a hierarchical multicell domain $H$ can be constructed by a selection procedure analogous to that proposed by Kraft in \cite{kraft1998} in the context of tensor-product B-splines. 

For all levels $\ell$, we select box splines translates, and define the sets $K^\ell$ as follows
\[
K^\ell = \bigl\{\beta^\ell\in B_\bn^\ell(M^\ell)\colon \supp \beta^\ell\cap \calM^{\ell -1}=\emptyset\bigr\}.
\]
The collection of these box splines translates in the levels $\ell=1,\dots, N$ forms a \emph{hierarchical box splines basis} given by
\begin{equation}\label{eq:basisK}
	K=\bigcup_{\ell=1}^N K^\ell.
\end{equation}
The linear independence of the functions in $K$ is implied by the local linear independence of the box splines at each level, see \cite{kraft1998}.

Then the question of completeness of hierarchical type-I box spline spaces can be stated as follows. For a given hierarchical multicell domain $H$, does the basis $K$ in Equation \eqref{eq:basisK} span the hierarchical box spline space $\SSS^\bd(H,\calV_\bn)$ defined in Equation \eqref{eq:HH}? Does $K$ span $\hat{\SSS}^\bd(H, \calV_\bn)$ defined in Equation \eqref{eq:HEV}?

In the following theorem we address the first question, and provide a sufficient condition for the completeness of the hierarchical spline basis.
\begin{theorem}\label{theorem:hierarchicalbasis}
	Let $H$ be a three-directional hierarchical multicell domain, and let $\calB_n$ be a box spline, for some triple $\bn=(n_1,n_2,n_3)\in \Z^3_{\geq 1}$.
	The basis $K$ in Equation (\ref{eq:basisK}) spans the hierarchical box spline space $\SSS^\bd(H,\calV_\bn)$ if each multicell domain $M^\ell$ of $H$ is admissible (Definition~\ref{def:admissible}) with respect to the grid level $\ell$. 
\end{theorem}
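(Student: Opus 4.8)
The plan is to argue by induction on the number of levels $N$, reducing at each step to the single-level completeness result of Corollary~\ref{cor:compl}. For the base case $N=1$ one has $H=M^1$ and $K=K^1=B_\bn(M^1)$ (since $\calM^0=\emptyset$), so the claim is exactly Corollary~\ref{cor:compl} applied to the admissible domain $M^1$. For the inductive step, I would fix $f\in\SSS^\bd(H,\calV_\bn)$ and consider the reduced hierarchy $H'$ consisting of the levels $1,\dots,N-1$ over the domain $\calM^{N-1}$, whose selected functions are precisely $K^1,\dots,K^{N-1}$. First I would check that $f|_{\calM^{N-1}}\in\SSS^\bd(H',\calV_\bn)$: the only edge-smoothness conditions dropped in passing from $H$ to $H'$ are those attached to edges of $\partial\calM^{N-1}$, whose diamonds in $H'$ degenerate to a single triangle and hence are vacuous. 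By the induction hypothesis there is a coarse combination $g=\sum_{\ell\le N-1}\sum_{\beta\in K^\ell}c_\beta\beta$ with $g=f$ on $\calM^{N-1}$.

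Next I would set $h=f-g$ and establish the key claim that $h\in\SSS^\bd(M^N,\calV_\bn)$, i.e.\ that $h$ is a genuine \emph{single-level} spline on the full level-$N$ domain $M^N$. That $h$ is piecewise in $\calV_\bn$ on the level-$N$ cells follows from the refinement relation~\eqref{eq:refeq}, which expresses each coarser box spline appearing in $g$ as a level-$N$ combination, so that $g|_{\calD^N}\in\calV_\bn$, while $h\equiv 0$ on $\calM^{N-1}$. For the smoothness across level-$N$ edges interior to $\calD^N$ one uses that $f$ satisfies the corresponding conditions of $\SSS^\bd(H)$ and that $g$, being a sum of box splines, is $C^{d_i}$ across every type-$i$ level-$N$ edge.

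The main obstacle is the smoothness of $h$ across the interface $\partial\calM^{N-1}$, that is, across the level-$N$ edges separating $\calD^N$ from $\calM^{N-1}$. Here I would use that $f$ is $C^{d_i}$ across such an edge $\varepsilon$ (its level-$N$ diamond in $M^N$ consists of two triangles, so the condition is present in $\SSS^\bd(H)$), that $g$ is likewise $C^{d_i}$ across $\varepsilon$, and that $f$ and $g$ agree together with all their derivatives on the $\calM^{N-1}$-side of $\varepsilon$. Subtracting, every derivative $D_\bs h$ with $\bs$ admissible for the edge type vanishes on $\varepsilon$ from the $\calD^N$-side, matching the zero coming from the $\calM^{N-1}$-side; hence $h$ is $C^{d_i}$ across the interface and the key claim holds. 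This matching of transverse derivatives at the interface, guaranteed by the edge-contact structure behind Lemma~\ref{lemma:ccl}, is the heart of the argument.

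Finally, since $M^N$ is admissible, Corollary~\ref{cor:compl} yields an expansion $h=\sum_{\beta\in B_\bn^N(M^N)}c_\beta\beta$, which is unique by the local linear independence of the box-spline translates (Remark~\ref{rem:lind}). Because $h\equiv 0$ on $\calM^{N-1}$, that same local linear independence forces $c_\beta=0$ for every $\beta$ whose support meets the interior of $\calM^{N-1}$; the surviving terms are exactly those with $\supp\beta\cap\calM^{N-1}=\emptyset$, i.e.\ $\beta\in K^N$. Therefore $h\in\spanset K^N$ and $f=g+h\in\spanset K$, completing the induction. The reverse inclusion $\spanset K\subseteq\SSS^\bd(H,\calV_\bn)$ is routine, each selected box spline lying in the space by its edge-smoothness and by vanishing to the appropriate order along the boundary of its support.
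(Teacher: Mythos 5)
Your proposal is correct and takes essentially the same route as the paper: it peels the hierarchy off level by level, applies the single-level completeness result (Corollary~\ref{cor:compl}, valid since each $M^\ell$ is admissible) to the successive residuals, and invokes the local linear independence of the box spline translates to conclude that each residual lies in $\spanset K^\ell$. Unwinding your induction on the number of levels $N$ reproduces exactly the paper's recursive decomposition $s=(h^1+\cdots+h^N)|_{\Omega}$ with $h^\ell|_{\calM^\ell}=s|_{\calM^\ell}-(h^1+\cdots+h^{\ell-1})|_{\calM^\ell}$, so the two arguments coincide up to the direction in which the recursion is phrased.
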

\begin{proof}
	The proof follows standard arguments already presented in
	\cites{GJ2013,mokris} for the case of hierarchical tensor B-spline
	bases.  
	Let $\Omega\subset \RR^2$ be the domain associated to $H$.  
	We prove by induction on the levels $\ell$ that every spline function $s\in\SSS^\bd(H,\calV_\bn)$ admits a representation
	\begin{equation}\label{eq:rep}
		s = (h^1 + \cdots + h^{N})|_{\Omega},
	\end{equation}
	where $h^\ell\in\spanset B^\ell_\bn (M^\ell)$, and 
	\begin{equation}\label{eq:168}
		h^\ell|_{\calM^\ell} = s|_{\calM^\ell} -
		(h^1+\ldots+h^{\ell-1})|_{\calM^\ell}.
	\end{equation}
	for $\ell=1,\dots,N$. 
	
	For any given level $\ell$, all functions $h^k|_{\calM^\ell}$ of lower
	levels $k<\ell$ are contained in $\SSS^\bd(M^\ell,\calV_\bn)$. This follows from the relation
	\[
	\spanset B^k_\bn(M^k)|_{M^\ell}\subseteq \SSS^\bd(M^\ell,\calV_\bn).
	\]
	It follows that $s|_{\calM^\ell}\in
	\SSS^\bd(M^\ell,\calV_\bn)$. Consequently, the right-hand side of Equation
	\eqref{eq:168} is contained in $\SSS^\bd(M^\ell,\calV_\bn)$. 
	Since the multicell domain $M^\ell$ is admissible, we conclude that
	$h^\ell\in\spanset B^\ell_\bn(M^\ell)$ according to Corollary
	\ref{cor:compl}. In particular, choosing $\ell=N$ in
	Equation \eqref{eq:168} we get Equation \eqref{eq:rep}.
	
	Moreover, the construction of the functions $h^\ell$ ensures that
	\begin{equation*}
		h^\ell|_{{\calM}^{\ell-1}}=0|_{{\calM}^{\ell-1}}.
	\end{equation*}
	Since the box splines possess the property of local linear
	independence we can conclude that
	$h^\ell\in\spanset K^\ell$. This completes the proof.
\end{proof}
Theorem \ref{theorem:hierarchicalbasis} is a consequence of Corollary \ref{cor:compl}, and therefore of Lemma \ref{lemma:ccl}. This result generalizes the completeness property of the space of translates of the quartic box spline $\calB_{\bm 2}$ proved in \cite{vmj2015}*{Theorem 26}. 

\section{Concluding remarks}\label{sec:conlusion}

\begin{remark}\label{remark1}
	The contact characterization property proved in Lemma \ref{lemma:ccl} implies the completeness of the space spanned by the translates of type-I box splines $\calB_\bn$ on a multicell domain $M$ with respect to the spline space $\Splsp^{\bd(\bn)}(M,\calV_{\bm n})$, where $\bd (\bn)=(n_2 +n_3 - 2, n_1 +n_3 - 2, n_1 +  n_2 - 2)$, and $\bn=(n_1,n_2,n_3)$. 
	This result holds whenever $M$ is an admissible domain i.e., whenever $M$ does not have over-concave vertices nor kissing triangles, and $\supp(\beta)$ is connected for all translates $\beta$ of $\calB_\bn$ which have support on $M$. 
	This is a sufficient condition, but it will also be interesting to prove necessary conditions to achieve this completeness property of the type-I box splines. 
	In this direction, a complete characterization of the vertex-vertex contact plays a crucial role. This calls 
	for exploring the algebraic formulation of super-smoothness at vertices
	in order to proving an analogous to Lemma \ref{lemma:ccl} for
	vertex-vertex contact of type-I box splines.
	
\end{remark}	

\begin{remark} We partially undertake the vertex-vertex contact question raised in the previous remark, for low degrees.
	In particular, we have verified algorithmically the vertex-vertex
	contact (that is, Lemma~\ref{lemma:ccl} for the case
	$\overline\triangle\cap\overline\triangle'=\nu\in V$ and the space
	$\hat{\SSS}^\bd(M, \calV_\bn)$) for all type-I box splines up to
	$\bn=(4,4,4)$.
	
	The approach is based on the fact that both statements $(i)$ and
	$(ii)$ of Lemma~\ref{lemma:ccl} yield linear relations on the coefficients
	$\lambda_\triangle^\beta(f|_\triangle)$ and
	$\lambda_{\triangle'}^\beta(f'|_{\triangle'})$.
	We can express each of the two statements as matrices $A^{(i)}$ and
	$A^{(ii)}$. The latter matrix has the form
	\begin{align*} 
		A^{(ii)} = 
		\left[
		\begin{array}{cccc}
			I_p    & \bm 0_q & -PI_p      & \bm 0_q
		\end{array}
		\right] ,
	\end{align*}
	where $P$ is a permutation matrix, $p = \# B_\bn(\hat\triangle)\cap B_\bn(\hat\triangle')$, and $q= \phi(\bn) - p$.
	
	The matrix\footnote{Observe that $\ker A^{(i)} = \hat{\SSS}^\bd(M, \calV_\bn)$.} $A^{(i)}$ can be computed as a product $A^{(i)} = C_{\nu} L_M$.
	The factor $C_\nu$ describes the B\'ezier continuity conditions at the common vertex $\nu$,
	corresponding to the smoothness type $\ST(\triangle,\triangle')$
	(Definition \ref{def:smooth_type}) and to the regularity vector $\bd =
	(n_2+n_3-2,n_1+n_3-2,n_1+n_2-2)$.
	The right factor $L_M$ is the Bernstein--B\'ezier representation of the translates of
	$\mathcal B_\bn$, considered independently on each of the two triangles of $M$:
	\begin{align*}
		L_M = \left[
		\begin{array}{cc}
			L_{\triangle} & 0 \\
			0 & L_{\triangle'}
		\end{array}
		\right]
	\end{align*}
	or, equivalently, a change of basis matrix of the space $C^{-1}(M^*)$.
	
	Showing that Lemma~\ref{lemma:ccl} holds in this setting (and for a fixed
	$\bn$) is done by showing that $A^{(i)}$ and $A^{(ii)}$ are equivalent
	matrices. Indeed, in all our computations we obtain $A^{(ii)}$ as the
	reduced row echelon form of $A^{(i)}$ for type-I box splines up to total degree $12$.
	This verifies that the matrices are equivalent and so the vertex-vertex contact lemma holds in these cases.
	\if 0
	{\small
		\begin{center}
			\begin{tabular}{|c|c|c|c|c|} \hline
				$\bn$ 
				& \footnotesize{$\{1,2\}$}& \footnotesize{$\{1,3\}$}& \footnotesize{$\{2,3\}$}& \footnotesize{$\{1,2,3\}$}
				\\ \hline
				111 &$1+4$&$1+4$&$1+4$&$1+4$ \\ \hline
				211 &$3+4$&$2+6$&$2+6$&$2+6$   \\ \hline
				221 &$5+6$&$5+6$&$4+8$&$4+8$  \\ \hline
				222 &$8+8$&$8+8$&$8+8$&$7+10$\\ \hline
				311 &$5+4$&$3+8$&$3+8$&$3+8$\\ \hline
				321 &$8+6$&$7+8$&$6+10$&$6+10$\\ \hline
				322 &$12+8$&$11+10$&$11+10$&$10+12$\\ \hline
				331 &$11+8$&$11+8$&$9+12$&$9+12$\\ \hline
				332 &$16+10$&$16+10$&$15+12$&$14+14$\\ \hline
				333 &$21+12$&$21+12$&$21+12$&$19+16$\\ \hline
			\end{tabular} \ \begin{tabular}{|c|c|c|c|c|} \hline
				$\bn$ 
				& \footnotesize{$\{1,2\}$}& \footnotesize{$\{1,3\}$}& \footnotesize{$\{2,3\}$}& \footnotesize{$\{1,2,3\}$}
				\\ \hline
				411 &$7+4$&$4+10$&$4+10$&$4+10$\\ \hline
				421 &$11+6$&$9+10$&$8+12$&$8+12$\\ \hline
				422 &$16+8$&$14+12$&$14+12$&$13+14$\\ \hline
				431 &$15+8$&$14+10$&$12+14$&$12+14$\\ \hline
				432 &$21+10$&$20+12$&$19+14$&$18+16$\\ \hline
				433 &$27+12$&$26+14$&$26+14$&$24+18$\\ \hline
				441 &$19+10$&$19+10$&$16+16$&$16+16$\\ \hline
				442 &$26+12$&$26+12$&$24+16$&$23+18$\\ \hline
				443 &$33+14$&$33+14$&$32+16$&$30+20$\\ \hline
				444 &$40+16$&$40+16$&$40+16$&$37+22$\\ \hline
			\end{tabular}
		\end{center}
	}
	\fi
\end{remark}

\begin{remark}
	In the discussion of completeness of hierarchical type-I box splines spaces there are two main differences to the original approach, which was 
	formulated for tensor-product splines. 
	
	First, the translates of a box spline do not span the whole space of
	bivariate polynomials of a given total degree. For this reason, this
	special polynomial subspace had to be identified. In some sense this
	situation generalizes the tensor-product case, where the B-splines
	span a polynomial space of a given (coordinate-wise) bi-degree,
	instead of the the space of bivariate polynomials of a given total
	degree.
	
	Second, the constraints on the domains are entirely different, due to
	the differences in the characterization of contacts between polynomial
	pieces. For bivariate tensor-product splines, both edge-edge and
	vertex-vertex contacts could be characterized easily by the equality
	of spline coefficients. In the case of type-I box splines, we proved the characterization solely for edge-edge contacts. Consequently, the completeness
	of hierarchical splines requires more severe restrictions to the
	hierarchical grid.
	
	We have alleviated these extra restrictions by proving algorithmically
	the vertex-vertex contact for small polynomial degrees (total degree
	up to 12).
	By proving a characterization of the vertex-vertex contact of box
	splines for arbitrary degree, as we indicated in Remark \ref{remark1},
	we could relax these restrictions on the hierarchical grids and
	construct more general hierarchical type-I spline space. We leave this
	as a future research direction.
\end{remark}

\end{document}